\theoremstyle{plain}
\newtheorem{lemma}{Lemma}[section]
\newtheorem{theorem}[lemma]{Theorem}
\newtheorem{prop}[lemma]{Proposition}
\newtheorem{exam}[lemma]{\normalfont \scshape
 Example}
\newtheorem{rem}[lemma]{\normalfont \scshape Remark}
\newcommand{\R}{\mathbb{R}}
\newcommand{\N}{\mathbb{N}}
\newcommand{\norm}[1]{\left\Vert#1\right\Vert}
\newcommand{\abs}[1]{\left\vert#1\right\vert}
\newcommand{\set}[1]{\left\{#1\right\}}
\newcommand{\bfx}{\bm{x}}
\newcommand{\bfzero}{\bm{0}}
\newcommand{\bfone}{\bm{1}}
\newcommand{\bfC}{\bm{C}}
\newcommand{\bfU}{\bm{U}}
\newcommand{\bfV}{\bm{V}}
\newcommand{\bfX}{\bm{X}}
\newcommand{\bfY}{\bm{Y}}
\newcommand{\bfy}{\bm{y}}
\newcommand{\bfZ}{\bm{Z}}
\newcommand{\bfeta}{\bm{\eta}}
\newcommand{\bfxi}{\bm{\xi}}
\newcommand{\barE}{\bar E^-[0,1]}
\begin{document}

\author[M. Falk and M. Hofmann]{Michael Falk and Martin Hofmann}

\address{\mbox{ } \newline University of W\"{u}rzburg \newline
Institute of Mathematics\newline  Emil-Fischer-Str. 30\newline 97074 W\"{u}rzburg\newline Germany\newline hofmann.martin@mathematik.uni-wuerzburg.de}

\title[Sojourn Times and the Fragility Index]{Sojourn Times and the Fragility Index\\
        {\tiny \normalfont to appear in ``Stochastic Processes and their Applications''}}%

\begin{abstract}
We investigate the sojourn time above a high threshold of a
continuous stochastic process $\bfY=(Y_t)_{t\in[0,1]}$. It turns out that the limit, as the threshold increases, of the expected sojourn time given that it is positive, exists if the copula process corresponding to $\bfY$ is in the functional domain of attraction of a max-stable process. This limit coincides with the limit of the fragility
index corresponding to $(Y_{i/n})_{1\le i\le n}$ as $n$ and the
threshold increase.

If the process is in a certain neighborhood of a generalized Pareto process, then we can replace the constant threshold by a general threshold function and we can compute the asymptotic sojourn time distribution. A max-stable process is a prominent example.
Given that there is an exceedance at $t_0$ above the
threshold, we can also compute the asymptotic distribution of the
excursion time, which the process spends above the threshold function.
\end{abstract}

\subjclass{Primary 60G70}%
\keywords{Sojourn time, fragility index, max-stable process, functional domain of attraction, copula process, generalized Pareto process, expected shortfall, sojourn time distribution, excursion time}


\maketitle

\section{Introduction}

Let $\bfY=(Y_t)_{t\in[0,1]}$ be a stochastic process with
continuous sample paths, i.e., $\bfY$ realizes in $C[0,1]$, and identical
continuous marginal distribution functions (df) $F$, say. We investigate
in this paper the  sojourn time of $\bfY$
above a threshold $s$
\[
S(s):=\int_0^1 1(Y_t>s)\,dt,
\]
under the condition that there is an exceedance, i.e.,
$S(s)>0$. Sojourn times of stochastic processes have been
extensively studied in the literature, with emphasis on Gaussian
processes and Markov random fields, we refer to \citet{berm92}
and the literature given therein. A more general approach is the excursion random measure as investigated by \citet{hsile98} for stationary processes. It is defined on sets $E\subset \R\times (0,\infty)$ as the time which the process (suitably standardized) will spend in $E$. Different to that,  we will investigate the sojourn time under the
condition that the copula process $\bfC:=(F(Y_t))_{t\in[0,1]}$
corresponding to $\bfY$ is in the functional domain of attraction
of a max-stable process $\bfeta$, say.

Denote by $N_s:=\sum_{i=1}^n 1_{(s,\infty)}(Y_{i/n})$ the number
of exceedances among $(Y_{i/n})_{1\le i\le
n}$ above the threshold $s$. The \textit{fragility index} (FI) corresponding to $(Y_{i/n})_{1\le
i\le n}$ is defined as the  asymptotic expectation of the number
of exceedances given that there is at least one exceedance:
\[
 FI:=\lim_{s\nearrow\omega(F)} E(N_s \mid N_s>0),
\]
where $\omega(F):=\sup\set{t\in\R:\,F(t)<1}$. The FI was introduced in \citet{gelhv07} to measure the stability
of a stochastic system. The system is called stable if $FI =1$,
otherwise it is called  fragile. The collapse of a bank,
symbolized by an exceedance, would be a typical example,
illustrating the FI as a measure of joint stability among a
portfolio of banks.

It turns out that the limit, as the threshold increases, of the expected sojourn time given that it is positive, exists if the copula process corresponding to $\bfY$ is in the functional domain of attraction of a max-stable process. This limit coincides with the limit of the FI corresponding to $(Y_{i/n})_{1\le i\le n}$ as $n$ and the
threshold increase.

For such processes, which are in a certain neighborhood of a generalized Pareto process (see Example \ref{exam:generalized_Pareto_process}), we can replace the constant threshold by a threshold function and we can compute the asymptotic sojourn time distribution above a high threshold function. A max-stable process is a prominent example. Given that there is an exceedance $Y_{t_0}>s$  above the
threshold $s$ at $t_0$, we can also compute the asymptotic distribution of the
remaining excursion time, that the process spends above the threshold function without cease.

This paper is organized as follows. In Section
\ref{sec:extreme_value_processes} we recall some mathematical
framework from functional extreme value theory and provide basic
definitions and tools. In particular we consider a functional
domain of attraction approach for stochastic processes, which is
more general than the usual one based on weak convergence.
In Section \ref{sec:domain_of_attraction_for_copula_processes} we
apply the framework from Section \ref{sec:extreme_value_processes}
to copula processes and derive characterizations of the domain of
attraction condition for copula processes.
In Section \ref{sec:sojourn_times_and_the_fragility_index} we use the results
from Section \ref{sec:domain_of_attraction_for_copula_processes}
to compute the limit $\lim_{s\nearrow\omega(F)}E(S(s)\mid S(s)>0)$ as the
threshold $s$ increases of the mean sojourn time, conditional on
the assumption that it is positive. We show that this limit
coincides with the FI. Our tools enable also the computation of the expected shortfall.
In Section \ref{sec: sojourn_time_distribution} we replace the constant threshold by a threshold function and we compute the limit distribution of the sojourn time for those processes, which are in a certain neighborhood of a generalized Pareto process.
Given that there is an exceedance at $t_0$, we compute in Section \ref{sec:cluster_length} the asymptotic distribution of the remaining excursion time that the process spends above a high threshold function.

To improve the readability of this paper we use bold face such as
$\bfxi$, $\bfY$ for stochastic processes and default font $f$,
$a_n$ etc. for non stochastic functions. Operations on functions
such as $\bfxi<a$ or $(\bfxi-b_n)/a_n$ are meant componentwise. The usual abbreviations \textit{df, fidis, iid} and \textit{rv} for the terms \textit{distribution function, finite dimensional distributions, independent and identically distributed} and  \textit{random variable}, respectively,  are used.

\section{Definitions and Preliminaries}

\subsection{Max-stable Processes and the Functional $D$-Norm}\label{sec:extreme_value_processes}

A \textit{max-stable process} (MSP)
$\bfxi=\left(\xi_t\right)_{t\in[0,1]}$  with realizations in $C[0,1]:=\{f:[0,1]\to\R:\ f \textrm{ continous}\}$, equipped with the sup-norm $\norm f_\infty=\sup_{t\in[0,1]}\abs{f(t)}$, is a
stochastic process with the characteristic property that its
distribution is max-stable, i.e., $\bfxi$ has the same distribution
as $\max_{1\leq i\leq n}(\bfxi_i-b_n)/a_n$ for independent copies
$\bfxi_1,\bfxi_2,\dots$ of $\bfxi$ and some $a_n,b_n\in C[0,1],\,
a_n>0$, $n\in\N$ (cf. \citet{dehaf06}).

We call a process $\bfeta$ with values in $C^-[0,1]:=\set{f\in C[0,1]:\,f < 0}$
a \textit{standard} MSP, if it is a MSP with standard negative
exponential (one-dimensional) margins, $P(\eta_t\le x)=\exp(x)$,
$x\le 0$, $t\in[0,1]$.

In what follows  $\bar C^-[0,1]:=\set{f\in
C[0,1]:\,f\le 0}$ denotes the set of all continuous function on $[0,1]$
that do not attain positive values.

The following characterization is essentially due to \citet{ginhv90}; we refer also to \citet{aulfaho11}.

\begin{prop}\label{prop:characterization_of_EP}
A process $\bfeta$ with realizations in $C^-[0,1]$ is a standard MSP if, and only if there exists a number $m\ge 1$ and a stochastic process $\bfZ$ in $\bar C^+[0,1]:=\set{f\in C[0,1]:\,f\ge 0}$ with the properties
\begin{equation}\label{eqn:properties_of_generator}
\max_{t\in[0,1]}Z_t = m,\qquad E(Z_t)=1,\quad t\in[0,1],
\end{equation}
such that for compact subsets $K_1,\dots,K_d$ of $[0,1]$ and $x_1,\dots,x_d\le 0$, $d\in\N$,
\begin{equation}\label{eqn:finite_distribution_of_EP}
P(\eta_t\le x_j,\,t\in K_j,\, 1\le j\le d)=\exp\left(-E\left(\max_{1\le j\le d} \left(\abs{x_j}\max_{t\in K_j}Z_t\right)\right)\right).
\end{equation}

Conversely, every stochastic process $\bfZ$ with realizations in $\bar C^+[0,1]$ satisfying \eqref{eqn:properties_of_generator} gives rise to a standard MSP. The connection is via \eqref{eqn:finite_distribution_of_EP}. We call $\bfZ$ \textit{generator} of $\bfeta$.
\end{prop}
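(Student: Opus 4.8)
The plan is to prove the two implications separately: the converse (that a generator $\bfZ$ yields a standard MSP) by an explicit Poisson point process construction, and the direct implication (that every standard MSP admits such a generator) by the Gin\'e--Hahn--Vatan spectral representation followed by a renormalization. For the converse, let $\bfZ_1,\bfZ_2,\dots$ be iid copies of a process $\bfZ\in\bar C^+[0,1]$ satisfying \eqref{eqn:properties_of_generator}, and let $\{u_i\}$ be the points of an independent Poisson point process on $(0,\infty)$ with intensity $u^{-2}\,du$. I would put $M_t:=\sup_i u_iZ_{i,t}$ and $\eta_t:=-1/M_t$, and read off the fidis from the void probabilities of the point process: for compact $K_1,\dots,K_d$ and $x_1,\dots,x_d\le0$, the event $\{\eta_t\le x_j,\ t\in K_j,\ 1\le j\le d\}$ coincides with $\{M_t\le1/\abs{x_j},\ t\in K_j,\ \forall j\}$, that is, with the absence of any point $(u_i,\bfZ_i)$ satisfying $u_i\max_{1\le j\le d}(\abs{x_j}\max_{t\in K_j}Z_{i,t})>1$. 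Writing $W:=\max_j(\abs{x_j}\max_{t\in K_j}Z_t)$ and integrating $\int_{1/W}^{\infty}u^{-2}\,du=W$, the intensity of the forbidden set equals $E(W)$, so the void probability is exactly $\exp(-E(\max_j(\abs{x_j}\max_{t\in K_j}Z_t)))$, which is \eqref{eqn:finite_distribution_of_EP}. Specializing to $d=1$, $K_1=\{t\}$ and using $E(Z_t)=1$ gives $P(\eta_t\le x)=\exp(-\abs{x})=\exp(x)$, i.e.\ standard negative exponential margins; and since $\abs{nx_j}=n\abs{x_j}$, the fidis of $\max_{1\le i\le n}\bfeta_i$ are those of $\bfeta/n$, so $\bfeta$ is max-stable with $a_n=1/n$, $b_n=0$.

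For the direct implication I would pass to unit Fr\'echet margins via $\zeta_t:=-1/\eta_t$, obtaining a max-stable process in $C[0,1]$ to which the Gin\'e--Hahn--Vatan representation applies, yielding a process $\bm W\in\bar C^+[0,1]$ with $E(W_t)=1$ and $\zeta_t\stackrel{d}{=}\sup_i u_iW_{i,t}$, with $\bm W_i$ iid copies of $\bm W$ and $\{u_i\}$ a point process as above. To enforce the normalization $\max_t Z_t=m$, I would exploit the scaling invariance $u_iW_{i,t}=(u_i\norm{\bm W_i}_\infty/c)(c\,W_{i,t}/\norm{\bm W_i}_\infty)$, i.e.\ the change of variables $(u,\bm w)\mapsto(u\norm{\bm w}_\infty/c,\,c\,\bm w/\norm{\bm w}_\infty)$. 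A short computation with the homogeneous intensity $u^{-2}\,du$ shows this maps the point process to one of the same type whose spectral law is $\lambda(\cdot)=c^{-1}E(\norm{\bm W}_\infty\,1(c\,\bm W/\norm{\bm W}_\infty\in\cdot))$. Choosing $c:=E(\norm{\bm W}_\infty)=E(\max_t W_t)=:m$ makes $\lambda$ a probability measure; a process $\bfZ\sim\lambda$ then satisfies $\max_t Z_t=c=m$ by construction and $E(Z_t)=c^{-1}E(\norm{\bm W}_\infty\cdot c\,W_t/\norm{\bm W}_\infty)=E(W_t)=1$, while $m=E(\max_t W_t)\ge\max_t E(W_t)=1$ gives $m\ge1$. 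Since the transformation leaves each product $u_iW_{i,t}$ unchanged, $\sup_i u_iZ_{i,t}=\zeta_t$ and hence the fidis \eqref{eqn:finite_distribution_of_EP} reproduce those of the original $\bfeta$.

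I expect the main obstacle to be path continuity rather than the fidi algebra. In the converse one must verify that $\bfeta$ admits a version in $C^-[0,1]$: using that $\max_t Z_t=m$ is a finite constant, the part $\max_{i:u_i>\eps}u_iZ_{i,\cdot}$ is a finite, hence continuous, maximum (the number of points with $u_i>\eps$ being a.s.\ finite), while the remaining points contribute at most $\eps m$ uniformly in $t$; letting $\eps\downarrow0$ exhibits $M_\cdot$ as a uniform limit of continuous functions, and the a.s.\ finiteness and strict positivity of $M$ (so that $\eta_\cdot=-1/M_\cdot<0$) complete the claim. It is precisely this passage from finite-dimensional statements to continuous sample paths, controlled by the boundedness constraint in \eqref{eqn:properties_of_generator}, that carries the real work; the renormalization is then routine change-of-variables bookkeeping, and the existence of the sample-continuous spectral representation is imported from Gin\'e--Hahn--Vatan.
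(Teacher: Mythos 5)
The paper offers no proof of Proposition \ref{prop:characterization_of_EP}: it is imported from \citet{ginhv90} (see also \citet{aulfaho11}), so there is no internal argument to compare yours against. What you write is essentially the standard proof underlying those references, and it is correct in its main lines. The converse direction --- a Poisson point process on $(0,\infty)$ with intensity $u^{-2}\,du$, marked by iid copies of $\bfZ$, with \eqref{eqn:finite_distribution_of_EP} read off as the void probability of the region $\{(u,z):\,u\max_{1\le j\le d}(\abs{x_j}\max_{t\in K_j}z_t)>1\}$ --- is the canonical construction, and your verification of the standard negative exponential margins and of max-stability with $a_n=1/n$, $b_n=0$ is fine. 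In the direct direction, the renormalization $(u,\bm w)\mapsto(u\norm{\bm w}_\infty/c,\;c\,\bm w/\norm{\bm w}_\infty)$ with $c=E(\norm{\bm W}_\infty)$ is precisely the content of \citet[Corollary 9.4.5]{dehaf06}, which the paper itself invokes in the sentence following the proposition in order to trade $\max_{t\in[0,1]}Z_t=m$ for $E(\max_{t\in[0,1]}Z_t)<\infty$; your computation of the pushed-forward spectral measure $\lambda$ and the checks $E(Z_t)=1$, $\max_t Z_t=c=m$, $m\ge1$ are all correct (modulo discarding the $P_{\bm W}$-null or irrelevant atom $\bm W\equiv 0$).

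The one step you assert rather than prove is that $\bfeta=-1/\bm M$ realizes in $C^-[0,1]$, which requires $\inf_{t\in[0,1]}M_t>0$ almost surely. Your uniform-approximation argument does establish continuity of $\bm M$ and its finiteness, and $P(M_t>0)=1$ for each fixed $t$ follows from the Fr\'{e}chet margin; but strict positivity simultaneously at every point of an uncountable index set does not follow from pointwise positivity plus continuity, and this is exactly the sample-path statement for which one leans on \citet{ginhv90}. A way to make it self-contained: since $\inf_t\max_{1\le i\le n}M^{(i)}_t\ge\max_{1\le i\le n}\inf_tM^{(i)}_t$, max-stability gives $p:=P\left(\inf_tM_t=0\right)\le p^{\,n}$ for every $n$, so $p\in\{0,1\}$, and it then suffices to exhibit a single positive-probability event on which $\inf_tM_t>0$; that last step still needs an argument using the accumulation of Poisson points near $0$ together with $E(Z_t)=1$ and continuity of $\bfZ$. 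If you intend the proposition to stand on its own rather than on the cited spectral representation, that is the point to expand; everything else is routine bookkeeping, as you say.
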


According to \citet[Corollary 9.4.5]{dehaf06} the condition $\max_{t\in[0,1]}Z_t = m$ in \eqref{eqn:properties_of_generator}  can be replaced by the condition $E\left(\max_{t\in[0,1]}Z_t\right)<\infty$. The number $m=E\left(\max_{t\in[0,1]}Z_t\right)$ is uniquely determined, see Remark \ref{rem:the_generator_constant_is_uniquely_determined}. Therefore, we call $m$ the \textit{generator constant} of $\bfeta$.

The preceding characterization implies in particular that the fidis of $\bfeta$ are multivariate negative EVD with standard negative exponential margins: We have for $0\le t_1<t_2\dots<t_d\le 1$
\begin{equation}\label{eqn:definition_of_D-norm}
-\log(G_{t_1,\dots,t_d})(\bfx)=E\left(\max_{1\le i\le
d}(\abs{x_i}Z_{t_i})\right)
=:\norm{\bfx}_{D_{t_1,\dots,t_d}},\ \bfx\le\bfzero\in\R^d,
\end{equation}
where $\norm{\cdot}_{D_{t_1,\dots,t_d}}$ is a \textit{$D$-norm} on $\R^d$ (cf. \citet{fahure10}).

Let $E[0,1]$ be the set of all bounded real-valued functions on $[0,1]$ which are discontinuous at a finite set of points. Moreover, denote by $\barE$ the set of those functions in $E[0,1]$ which do not attain positive values.

For a generator process $\bfZ$ in $\bar C^+[0,1]$ as in
Proposition \ref{prop:characterization_of_EP} and all $f\in E[0,1]$ set
\[
\norm{f}_D:=E\left(\sup_{t\in [0,1]} \left(\abs{f(t)}Z_t\right)\right).
\]
Obviously, $\norm{\cdot}_D$  defines a norm on $E[0,1]$, called a
\textit{$D$-norm with generator $\bfZ$}; see \citet{aulfaho11} for further details.

The following result is established in \citet{aulfaho11}.

\begin{lemma}\label{prop:distribution_function_of_standard_EP}
Let $\bfeta$ be a standard MSP with generator $\bfZ$. Then we have for each $ f\in \bar E^-[0,1]$
\begin{equation}\label{eq:distribution_function_of_standard_EP}
P(\bfeta\le f) = \exp\left(-\norm f_D\right)= \exp\left(-E\left(\sup_{t\in [0,1]}\left(\abs{f(t)}Z_t\right)\right)\right).
\end{equation}

Conversely, if there is some $\bfZ$ with properties \eqref{eqn:properties_of_generator} and some $\bfeta\in C^-[0,1]$ which satisfies \eqref{eq:distribution_function_of_standard_EP}, then $\bfeta$ is standard max-stable with generator $\bfZ$.
\end{lemma}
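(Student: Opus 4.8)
The plan is to prove the direct part by reducing $P(\bfeta\le f)$ to the finite-dimensional formula \eqref{eqn:finite_distribution_of_EP} through an approximation of $f$ by step functions, and to obtain the converse by specialising \eqref{eq:distribution_function_of_standard_EP} back to step functions and invoking the converse half of Proposition \ref{prop:characterization_of_EP}.

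For the direct part I would first settle the case where $f\in\barE$ is a step function, i.e.\ $f=x_j\le 0$ on finitely many intervals whose closures $K_1,\dots,K_d$ cover $[0,1]$. Since $\bfeta$ has continuous paths, the constraint $\eta_t\le x_j$ on an open interval automatically extends to its closure, so that $P(\bfeta\le f)=P(\eta_t\le x_j,\ t\in K_j,\ 1\le j\le d)$; then \eqref{eqn:finite_distribution_of_EP}, together with the continuity of $\bfZ$ which turns each $\sup_{t\in K_j}Z_t$ into a maximum, yields precisely $P(\bfeta\le f)=\exp(-E(\max_{1\le j\le d}\abs{x_j}\max_{t\in K_j}Z_t))=\exp(-\norm f_D)$. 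For a general $f\in\barE$ I would sandwich it between step functions: writing $s_1<\dots<s_k$ for the finitely many discontinuities of $f$ and refining a partition of $[0,1]$ that contains these points, set $f_n$ equal to the supremum and $g_n$ equal to the infimum of $f$ on each partition cell, so that $g_n\le f\le f_n$ with all three in $\barE$. Monotonicity of $\{\bfeta\le\cdot\}$ and the step-function case give $\exp(-\norm{g_n}_D)=P(\bfeta\le g_n)\le P(\bfeta\le f)\le P(\bfeta\le f_n)=\exp(-\norm{f_n}_D)$, so it remains only to show $\norm{f_n}_D\to\norm f_D$ and $\norm{g_n}_D\to\norm f_D$.

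This last convergence is the crux, and it is where the continuity of the generator is essential. Writing $\phi:=\abs f$, the two approximants correspond to the $\inf$- and $\sup$-step approximations of $\phi$; since $\norm{\cdot}_D$ is built from a supremum over $t$ rather than an integral, one cannot a priori dismiss the behaviour near the finitely many jump points $s_i$ as negligible. What rescues the argument is that the continuity of $\bfZ$ forces $\sup_{t}\phi(t)Z_t\ge\max(\phi(s_i^-),\phi(s_i),\phi(s_i^+))\,Z_{s_i}$ for each $i$, because the one-sided limits of $\phi$ are attained as limits of values along the continuity cells while $Z_t\to Z_{s_i}$. Hence the possible overshoot of the upper approximation near each $s_i$ is already dominated by $\sup_t\phi(t)Z_t$, while the lower approximation recovers these one-sided heights along the cells; both therefore satisfy $\sup_t\abs{f_n(t)}Z_t\to\sup_t\phi(t)Z_t$ and $\sup_t\abs{g_n(t)}Z_t\to\sup_t\phi(t)Z_t$ almost surely. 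Dominated convergence, with the integrable envelope $\norm f_\infty\sup_tZ_t$ whose expectation is the finite generator constant $m$, then upgrades this to $\norm{f_n}_D,\norm{g_n}_D\to\norm f_D$, and the squeeze gives $P(\bfeta\le f)=\exp(-\norm f_D)$.

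The converse is a short specialisation. If $\bfeta\in C^-[0,1]$ satisfies \eqref{eq:distribution_function_of_standard_EP} for every $f\in\barE$, then taking $f$ equal to $x_j\le0$ on compact sets $K_j$ and $0$ elsewhere reproduces exactly \eqref{eqn:finite_distribution_of_EP} for the fidis of $\bfeta$, since $\norm f_D=E(\max_j\abs{x_j}\max_{t\in K_j}Z_t)$ in that case; the converse part of Proposition \ref{prop:characterization_of_EP} then identifies $\bfeta$ as a standard MSP with generator $\bfZ$. I expect the main obstacle to be the control of the $D$-norm of the step approximants near the jumps described above; once the continuity of $\bfZ$ is exploited as indicated, the remaining steps are routine.
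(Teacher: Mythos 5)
Note first that the paper does not prove this lemma itself; it is quoted from \citet{aulfaho11}, so there is no in-paper argument to compare against. Judged on its own terms, your plan (step functions via \eqref{eqn:finite_distribution_of_EP}, then a squeeze) is a reasonable route, and your treatment of the lower approximant and of the converse (specialising to $f=\sum_i x_i1_{\set{t_i}}$ to recover the fidis) is essentially sound. But the upper half of your sandwich has a genuine gap. The majorant $f_n:=\sup_{\text{cell}}f$ erases isolated downward spikes of $f$, and such spikes cannot be dismissed: the functions $f=\sum_i x_i 1_{\set{t_i}}$, which are exactly the elements of $\barE$ the paper needs in order to read off the fidis from \eqref{eq:distribution_function_of_standard_EP}, consist of nothing but spikes. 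Concretely, take $f=-1_{\set{1/2}}$. Every cell containing $1/2$ also contains points where $f=0$, so $f_n\equiv 0$ no matter how you refine the partition (merely including $1/2$ among the partition points does not help, since the supremum over an adjacent cell still ignores the value $f(1/2)=-1$). Then $\exp(-\norm{f_n}_D)=1$, while the lemma asserts $P(\bfeta\le f)=P(\eta_{1/2}\le -1)=e^{-1}$; your squeeze only yields $e^{-\norm{g_n}_D}\le P(\bfeta\le f)\le 1$ and does not close. Your claim that ``both'' approximants satisfy $\sup_t\abs{f_n(t)}Z_t\to\sup_t\abs{f(t)}Z_t$ is therefore false as stated: your continuity argument controls the possible \emph{overshoot} of the minorant $g_n$ (where $\abs{g_n}=\sup_{\text{cell}}\abs f$) near a jump, but says nothing about the \emph{undershoot} of $\abs{f_n}=\inf_{\text{cell}}\abs f$ at a point $s_i$ with $\abs{f(s_i)}>\max(\abs{f(s_i^-)},\abs{f(s_i^+)})$, where the term $\abs{f(s_i)}Z_{s_i}$ is simply lost.

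The repair is not difficult, but it must be made explicit. Treat each discontinuity point $s_i$ as a singleton cell of the partition, so that $f_n(s_i)=g_n(s_i)=f(s_i)$, and take sup/inf only over the open intervals between consecutive partition points, on which $f$ is continuous. The step-function case already covers such $f$, since \eqref{eqn:finite_distribution_of_EP} admits singletons $K_j=\set{s_i}$ alongside closed intervals. For the convergence of the $D$-norms one then argues via the uniform continuity of the paths of $\bfZ$: for any cell $C$, $\bigl(\sup_C\abs f\bigr)\bigl(\sup_{\bar C}Z\bigr)\le\sup_{t\in C}\abs{f(t)}Z_t+\norm f_\infty\,\mathrm{osc}(Z,\bar C)$, and $\max_C\mathrm{osc}(Z,\bar C)\to 0$ a.s.\ as the mesh shrinks, which handles the majorant of $\abs f$ without any assumption on the one-sided limits of $f$; the minorant is handled at continuity points by $\inf_{C_n(t)}\abs f\to\abs{f(t)}$ and at the points $s_i$ by the singleton cells. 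Dominated convergence with envelope $\norm f_\infty\sup_tZ_t$ (integrable by \eqref{eqn:properties_of_generator}) then finishes the direct part as you intended. In the converse, restrict to singletons rather than arbitrary compact sets $K_j$: for a general compact $K$ the function $x\,1_K$ need not lie in $\barE$, whereas the fidis obtained from singletons already determine the law of a process in $C^-[0,1]$.
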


The representation $ P(\bfeta\le f) = \exp\left(-\norm f_D\right)$, $f\in \bar
C^-[0,1]$, of a standard MSP is in complete accordance with the df of a
multivariate EVD with standard negative exponential margins via a
$D$-norm on $\R^d$ as developed in \citet[Section 4.4]{fahure10}.

Note that for $d\in\N$, $0\le t_1<\dots<t_d\le 1$ and $\bfx=(x_1,\dots,x_d)\in(-\infty,0]^d$,  the function
\[
f(t)=\sum_{i=1}^d x_i 1_{\set{t_i}}(t),\qquad t \in[0,1],
\]
is an element of $\bar E^-[0,1]$ with the property
\begin{equation*}
P(\bfeta\le f)= \exp\left(-\norm{\bfx}_{D_{t_1,\dots,t_d}}\right).
\end{equation*}
So representation \eqref{eq:distribution_function_of_standard_EP} incorporates all fidis of $\bfeta$. This is one of the reasons, why we favor a MSP $\bfeta$ with standard negative exponential margins, whereas \citet{dehaf06}, for instance, consider a continuous MSP $\bfxi=(\xi_t)_{t\in[0,1]}$ with standard Fr\'{e}chet margins $P(\xi_t\le x)=\exp\left(-x^{-1}\right)$, $x>0$, called \textit{simple} MSP. Actually, these are dual approaches, as we have
\[
\bfxi=-\frac 1{\bfeta}\mbox{  and  }\bfeta=-\frac 1 \bfxi,
\]
taken pointwise (see \citet{aulfaho11}). A simple MSP satisfies for $g:[0,1]\to(0,\infty)$ with $\tilde f:=-1/g \in\bar E^-[0,1]$, consequently,
\[
P(\bfxi\le g)=P\left(\bfeta\le -\frac 1 g\right)=\exp\left(-\norm{\frac 1 g}_D\right),
\]
but, different to $\bfeta$ we do not obtain the fidis of $\bfxi$ by a suitable choice of $g$.

Just like in the uni- or multivariate case, we might consider
\begin{equation*}\label{eqn:distribution_function_of_stochastic_process}
 H(f):=P(\bfY\le f),\qquad f\in \barE,
\end{equation*}
as the  df  of a stochastic process $\bfY$ in $\bar C^-[0,1]$.

\subsection{Functional Domain of Attraction}

According to \citet{aulfaho11} we say that a stochastic process $\bfY$ in $C[0,1]$ is \textit{in the functional domain of attraction of a standard MSP}   $\bfeta$,
denoted by $\bfY\in \mathcal D(\bfeta)$, if there are functions
$a_n\in C^+[0,1]$, $b_n\in C[0,1]$, $n\in\N$, such that
\begin{equation}\label{cond:definition_of_domain_of_attraction}
\lim_{n\to\infty}P\left(\frac{\bfY-b_n}{a_n}\le f\right)^n = P(\bfeta\le f)=\exp\left(-\norm f_D\right)      
\end{equation}
for any $f\in\barE$. This is equivalent to
\begin{equation}\label{cond:_equivalent_definition_of_domain_of_attraction}
 \lim_{n\to\infty} P\left(\max_{1\le i\le n}\frac{\bfY_i-b_n}{a_n}\le f\right)=P(\bfeta\le f)       
\end{equation}
for any $f\in\barE$, where $\bfY_1,\bfY_2,\dots$ are independent copies of $\bfY$.

There should be no risk of confusion with the notation of domain
of attraction in the sense of weak convergence of stochastic
processes as investigated in \citet{dehal01}. But to
distinguish between these two approaches we will consistently
speak of \textit{functional} domain of attraction in this paper,
when the above definition is meant. Actually, this definition of
domain of attraction is less restrictive as the next lemma shows; it is established in \citet{aulfaho11}.

\begin{lemma}\label{lem:weak_convergence_implies_domain_of_attraction}
Suppose that $\bfY$ is a continuous process in $\bar C^-[0,1]$.
If the sequence of continuous processes $\bfX_n:=\max_{1\le i\le
n}\left(\left(\bfY_i-b_n)/a_n\right)\right)$ converges weakly in
$\bar C^-[0,1]$, equipped with the sup-norm $\norm\cdot_\infty$,
to the standard MSP $\bfeta$, then $\bfY\in\mathcal D(\bfeta)$ in the sense of condition \eqref{cond:definition_of_domain_of_attraction}.
\end{lemma}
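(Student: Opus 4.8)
The plan is to establish directly the equivalent form \eqref{cond:_equivalent_definition_of_domain_of_attraction} of the domain of attraction condition, namely that $\lim_{n\to\infty}P(\bfX_n\le f)=P(\bfeta\le f)$ for every $f\in\barE$; by the equivalence recorded just before the statement this yields \eqref{cond:definition_of_domain_of_attraction}. Writing $P(\bfX_n\le f)=P(\bfX_n\in A_f)$ with $A_f:=\set{g\in\bar C^-[0,1]:\,g\le f}$, the first step is to realize $A_f$ as a sublevel set of a continuous functional. For fixed $f\in\barE$ I would put $\Phi_f(g):=\sup_{t\in[0,1]}(g(t)-f(t))$. Since $f$ is bounded, $\Phi_f$ is finite on $\bar C^-[0,1]$, and $\abs{\Phi_f(g)-\Phi_f(g')}\le\norm{g-g'}_\infty$, so $\Phi_f$ is continuous for the sup-norm. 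Consequently $A_f=\Phi_f^{-1}((-\infty,0])$ is closed and $B_f:=\Phi_f^{-1}((-\infty,0))$ is open, with $B_f\subseteq A_f$. The point to stress is that this holds for \emph{every} $f\in\barE$, irrespective of its jumps, which is what makes the discontinuous test functions harmless.

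Next I would run the Portmanteau theorem from both sides. Weak convergence $\bfX_n\to\bfeta$ in $(\bar C^-[0,1],\norm\cdot_\infty)$ gives $\limsup_n P(\bfX_n\in A_f)\le P(\bfeta\in A_f)=P(\bfeta\le f)$ for the closed set $A_f$, and $\liminf_n P(\bfX_n\in B_f)\ge P(\bfeta\in B_f)$ for the open set $B_f$. Since $B_f\subseteq A_f$, the second inequality already gives $\liminf_n P(\bfX_n\le f)\ge P(\bfeta\in B_f)$. Hence the whole claim reduces to the single identity $P(\bfeta\in B_f)=P(\bfeta\le f)$; equivalently, $A_f$ is a $\bfeta$-continuity set, its boundary $\set{g\le f:\,\Phi_f(g)=0}$ being $\bfeta$-null.

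The core of the argument is this last identity, and here the explicit $D$-norm representation does the work. I would observe that $B_f=\bigcup_{\eps>0}\set{\bfeta\le f-\eps}$, the union being increasing as $\eps\downarrow0$, because $\Phi_f(\bfeta)<0$ holds iff $\bfeta\le f-\eps$ for some $\eps>0$. For every $\eps>0$ the function $f-\eps$ lies in $\barE$ (as $f\le0$ forces $f-\eps<0$), so Lemma \ref{prop:distribution_function_of_standard_EP} applies and, using $\abs{f-\eps}=\abs f+\eps$, gives $P(\bfeta\le f-\eps)=\exp(-E(\sup_{t}(\abs{f(t)}+\eps)Z_t))$. As $\eps\downarrow0$ the integrand decreases to $\sup_t\abs{f(t)}Z_t$ and is dominated by $(\sup_t\abs{f(t)}+1)\max_{t}Z_t$, which is integrable since $\max_{t\in[0,1]}Z_t=m$; dominated convergence then gives $E(\sup_t(\abs{f(t)}+\eps)Z_t)\to\norm f_D$ and therefore $P(\bfeta\le f-\eps)\to\exp(-\norm f_D)=P(\bfeta\le f)$. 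Continuity of measure from below yields $P(\bfeta\in B_f)=\lim_{\eps\downarrow0}P(\bfeta\le f-\eps)=P(\bfeta\le f)$, which closes the sandwich and proves \eqref{cond:_equivalent_definition_of_domain_of_attraction}.

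The step I expect to be the genuine obstacle is exactly the passage from weak convergence, which only controls sets cut out by continuous functions, to the condition over the larger class $\barE$ of functions with finitely many jumps. The tempting route of sandwiching $f$ between continuous functions $f_m^-\le f\le f_m^+$ breaks down at downward spikes of $f$: a continuous $f_m^+\ge f$ cannot reproduce an isolated deep dip, so $P(\bfeta\le f_m^+)$ need not converge to $P(\bfeta\le f)$ and the upper sandwich fails to be tight. The resolution above avoids approximating $f$ by continuous functions altogether; it keeps $f$ fixed and only translates it by the constants $\eps$, leaning on the fact that the representation in Lemma \ref{prop:distribution_function_of_standard_EP} is valid verbatim for the discontinuous $f-\eps$.
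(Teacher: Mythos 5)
The paper itself gives no proof of Lemma \ref{lem:weak_convergence_implies_domain_of_attraction}; it is quoted from \citet{aulfaho11}, so there is no in-text argument to compare against. Your proof is correct and self-contained: the functional $\Phi_f(g)=\sup_{t\in[0,1]}(g(t)-f(t))$ is $1$-Lipschitz for $\norm\cdot_\infty$ even when $f\in\barE$ has jumps, so $\set{g:\,g\le f}$ is closed and $\set{\Phi_f<0}$ is open, and the two Portmanteau inequalities reduce the claim to the boundary being a $\bfeta$-null set. Your verification of that point --- writing $\set{\Phi_f(\bfeta)<0}$ as the increasing union of the events $\set{\bfeta\le f-\eps}$ and letting $\eps\downarrow 0$ in the exact formula $P(\bfeta\le f-\eps)=\exp\left(-\norm{f-\eps}_D\right)$ from Lemma \ref{prop:distribution_function_of_standard_EP}, where $\norm f_D\le\norm{f-\eps}_D\le\norm f_D+\eps\, m$ --- is sound, and it is exactly the step where the explicit max-stable structure of the limit, rather than weak convergence alone, is needed; your closing remark about why sandwiching $f$ between continuous functions would fail is also accurate. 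This is the natural Portmanteau route to the statement, and I see nothing to correct.
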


Note that the reverse implication in the preceding does not hold, i.e., convergence in the sense of condition \eqref{cond:definition_of_domain_of_attraction} is strictly weaker than weak convergence in $C[0,1]$.  One can also show that \eqref{cond:definition_of_domain_of_attraction} implies hypoconvergence of the normalized maximum process in the sense of \citet[Chapter 5, Section 3.1]{mol05}.

\subsection{Domain of Attraction for Copula Processes}\label{sec:domain_of_attraction_for_copula_processes}
 The sojourn time distribution of a stochastic process $(Y_t)_{t\in[0,1]}$ with identical continuous univariate marginal df $F$  does not depend on this marginal df but on the corresponding copula process. This is immediate from the equality $\int_0^11(Y_t > s)\,dt = \int_0^1 1(U_t > F(s))\,dt$, where $U_t:=F(Y_t)$ is uniformly on $(0,1)$ distributed for each $t\in[0,1]$. We, therefore, recall in this section results for copula processes established in \citet{aulfaho11}.

Let $\bfY=(Y_t)_{t\in[0,1]}$ be a continuous stochastic process with
identical continuous marginal df $F$. Set
\[
\bfU=(U_t)_{t\in[0,1]}=(F(Y_t))_{t\in[0,1]},
\]
which is the \textit{copula process} corresponding to $\bfY$.

We conclude from \citet{dehal01} that the
process $\bfY$ is in the domain of attraction  of a MSP if, and
only if each $Y_t$ is in the domain of attraction of a univariate
extreme value distribution together with the condition that the
copula process converges in distribution to a standard MSP
$\bfeta$, that is
\[
\left(\max_{1\le i\le n}n(U^{(i)}_t-1)\right)_{t\in[0,1]}\to_D \bm{\eta}
\]
in $C[0,1]$, where $\bfU^{(i)}$, $i\in\N$, are independent copies
of $\bfU$.  Note that the univariate margins determine the norming
constants, so  the norming functions can be chosen as the constant
functions $a_n=1/n$, $b_n=1$, $n\in\N$. Lemma \ref{lem:weak_convergence_implies_domain_of_attraction} implies that $\bfU$ is in the functional domain of attraction of $\bfeta$.

Suppose that the rv $(Y_{i/d})_{i=1}^d$ is in the ordinary domain of attraction of a multivariate EVD (see, for instance, \citet[Section 5.2]{fahure10}). Then we know from \citet{aulbf11} that the copula $C_d$ corresponding to the rv $(Y_{i/d})_{i=1}^d$  satisfies the equation
\begin{equation}\label{eqn:expansion_of_copula}
C_d(\bfy)=1-\norm{\bfone-\bfy}_{D_d}+o\left(\norm{\bfone-\bfy}_{\infty}\right),
\end{equation}
as $\norm{\bfone-\bfy}_{\infty}\to\bfzero$, uniformly in $\bfy\in[0,1]^d$, where the $D$-norm is given by
\[
\norm{\bfx}_{D_d} =E\left(\max_{1\le i\le d}\left(\abs{x_i}Z_{i/d}\right)\right),\qquad \bfx\in\R^d.
\]

The following analogous result for the functional domain of attraction was established in \citet{aulfaho11}.

\begin{prop} Suppose that $\bfU$ with realizations in $\bar C^+[0,1]$ is a copula process.
The following equivalences hold:
\begin{align}
&\bfU\in \mathcal{D}(\bfeta) \mbox{ in the sense of condition \eqref{cond:definition_of_domain_of_attraction}}\nonumber\\
&\iff P\left(\bfU-1\le \frac f n\right)=1-\norm{\frac f n}_D+o\left(\frac 1 n \right),\quad f\in\barE,\mbox{ as }n\to\infty, \nonumber\\
&\iff P(\bfU-1\le \abs c f)=1+c\norm{f}_D+o(c), \quad f\in\barE,\mbox{ as }c\uparrow 0, \label{eqn:equivalent_formulation_of_domain_of_attraction}
\end{align}
\end{prop}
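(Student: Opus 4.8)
The plan is to prove a chain of equivalences, treating each $f\in\barE$ separately. The starting observation is that for a copula process the univariate margins are uniform, so in condition \eqref{cond:definition_of_domain_of_attraction} the norming functions may be taken as the constants $a_n=1/n$ and $b_n=1$, as recorded above. Hence membership $\bfU\in\mathcal D(\bfeta)$ is, for fixed $f\in\barE$, exactly the statement $p_n^n\to\exp(-\norm f_D)$, where I abbreviate $p_n:=P\left(\bfU-1\le f/n\right)$. Throughout I use that $\norm\cdot_D$ is genuinely a norm, hence positively homogeneous, $\norm{f/n}_D=\norm f_D/n$, and that it is finite on $\barE$, since $\norm f_D=E\left(\sup_t\abs{f(t)}Z_t\right)\le\norm f_\infty\, m<\infty$ with $m$ the generator constant.

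For the equivalence of the first two lines I would invoke the elementary fact that for numbers $p_n\in[0,1]$ one has $p_n^n\to\exp(-\lambda)$ with $\lambda\in[0,\infty)$ if and only if $n(1-p_n)\to\lambda$. First, $p_n^n\to\exp(-\norm f_D)>0$ forces $p_n\to1$, because any subsequence bounded away from $1$ would drive $p_n^n$ to $0$. Given $p_n\to1$, write $\log p_n=-(1-p_n)+O\bigl((1-p_n)^2\bigr)$; multiplying by $n$ and absorbing the quadratic remainder (which is $O((1-p_n)\cdot n(1-p_n))\to0$) shows that $n\log p_n\to-\norm f_D$ is equivalent to $n(1-p_n)\to\norm f_D$. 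The latter is precisely $p_n=1-\norm f_D/n+o(1/n)=1-\norm{f/n}_D+o(1/n)$, which is the second line, and the converse direction reverses the same computation.

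It remains to match the second line (a limit along the discrete sequence $1/n$) with the third (a limit along a continuous parameter $c\uparrow0$). Writing $\eps:=\abs c$ and $\psi(\eps):=1-P(\bfU-1\le\eps f)$, both statements assert $\psi(\eps)/\eps\to\norm f_D$, the second along $\eps=1/n$ and the third as $\eps\downarrow0$; note that after using homogeneity the third line reads $P(\bfU-1\le\abs c f)=1-\norm{\abs c f}_D+o(\abs c)$, which coincides with $1+c\norm f_D+o(c)$ since $c<0$. The implication third $\Rightarrow$ second is immediate by restricting to $\eps=1/n$. For second $\Rightarrow$ third I would use monotonicity: since $f\le0$, the event $\set{\bfU-1\le\eps f}$ shrinks as $\eps$ increases, so $\psi$ is non-decreasing in $\eps$. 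Sandwiching an arbitrary $\eps\in[1/(n+1),1/n]$ gives
\[
\frac{\psi(1/(n+1))}{1/(n+1)}\cdot\frac{n}{n+1}\le\frac{\psi(\eps)}{\eps}\le\frac{\psi(1/n)}{1/n}\cdot\frac{n+1}{n},
\]
and both outer expressions converge to $\norm f_D$ by the second line, so the squeeze yields $\psi(\eps)/\eps\to\norm f_D$ as $\eps\downarrow0$.

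The main obstacle is this last passage from the sequence $1/n$ to the continuum: the expansion in the second line is only assumed along one sequence, and a priori nothing is known between consecutive points. The monotonicity of $\psi$ in $\eps$ --- which is exactly where the sign constraint $f\le0$ built into $\barE$ is used --- is what makes the squeeze work and is the crux of the argument; the remaining steps are the standard estimate relating $p_n^n$ and $n(1-p_n)$, together with bookkeeping using the homogeneity of the $D$-norm.
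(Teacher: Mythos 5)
Your argument is correct in substance. Note first that the paper itself does not prove this proposition; it is quoted from the cited reference \citet{aulfaho11}, so there is no in-paper proof to compare against. Your two main steps are exactly the standard ones for such a statement: the elementary equivalence $p_n^n\to e^{-\lambda}\iff n(1-p_n)\to\lambda$ (valid here because $\norm f_D\le m\norm f_\infty<\infty$ guarantees $e^{-\norm f_D}>0$, hence $p_n\to 1$), and the monotonicity-plus-squeeze argument to interpolate from the sequence $\eps=1/n$ to the continuum $\eps\downarrow 0$, which is indeed where the sign restriction $f\le 0$ in $\barE$ does real work. The sign bookkeeping in the third line ($c<0$, so $1+c\norm f_D=1-\abs c\,\norm f_D$) is also handled correctly.

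The one place where you lean on an unproved assertion is the very first reduction: condition \eqref{cond:definition_of_domain_of_attraction} only posits \emph{some} norming functions $a_n\in C^+[0,1]$, $b_n\in C[0,1]$, whereas your identification of membership in $\mathcal D(\bfeta)$ with $p_n^n\to\exp(-\norm f_D)$ presupposes the canonical choice $a_n=1/n$, $b_n=1$. The paper asserts this normalization in the surrounding discussion (via the uniform margins and the convergence-of-types theorem applied pointwise), but passing from general $a_n,b_n$ to the canonical ones in the \emph{functional} condition requires replacing $f$ by the uniformly convergent functions $g_n=(f/n+1-b_n)/a_n$ inside the probability and arguing that the limit is unaffected; this uses monotonicity in $f$ and continuity of $\norm\cdot_D$ with respect to $\norm\cdot_\infty$ and is not completely automatic. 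Flagging and filling that step would make the proof self-contained; everything else is sound.
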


Note that condition \eqref{eqn:equivalent_formulation_of_domain_of_attraction} holds if
\begin{equation}\label{eqn:sharpening_of_equivalent_formulation_of_domain_of_attraction}
P(\bfU-1\le g)=1-\norm g_D+o(\norm g_\infty)\tag{\ref{eqn:equivalent_formulation_of_domain_of_attraction}'}
\end{equation}
as $\norm g_\infty\to 0$, uniformly for all $g\in\barE$ with
$\norm g_\infty\le 1$. It is an open problem whether \eqref{eqn:sharpening_of_equivalent_formulation_of_domain_of_attraction} and \eqref{eqn:equivalent_formulation_of_domain_of_attraction} are, actually, equivalent conditions.

\section{Sojourn Times and the Fragility
Index}\label{sec:sojourn_times_and_the_fragility_index}

Let $\bfY=(Y_t)_{t\in[0,1]}$ be a continuous stochastic process with
identical continuous marginal df $F$. We investigate in this section the
mean of the  sojourn time  of $\bfY$ above a threshold $s$
\[
S(s) =\int_0^1 1(Y_t>s)\,dt,
\]
under the condition that there is an exceedance, i.e.,
$S(s)>0$. In particular we establish its asymptotic equality with
the limit of the FI corresponding to $(Y_{i/n})_{1\le i\le n}$.

Before we present the main results of this section we need some
auxiliary results. Put for $n\in\N$
\[
S_n(s):=\frac 1n \sum_{i=1}^n 1(Y_{i/n}>s),
\]
which is a Riemann sum of the integral $S(s)$. We have
\[
S_n(s)\to_{n\to\infty} S(s)
\]
and, thus,
\[
P(S_n(s)\le x)\to_{n\to\infty} P(S(s)\le x)
\]
for each $x\ge 0$ such that $P(S(s)=x)=0$. As a consequence we obtain
\begin{align*}
P(S_n(s)\le x\mid S_n(s)>0) &= \frac{P(0<S_n(s)\le x)}{P(S_n(s)>0)}\\
&\to_{n\to\infty} \frac{P(0<S(s)\le x)}{P(S(s)>0)}\\
&= P(S(s)\le x\mid S(s)>0)
\end{align*}
for each such $x > 0$. This conclusion requires the following argument.

\begin{lemma} We have
\[
P(S_n(s)=0)\to_{n\to\infty} P(S(s)=0),
\]
\end{lemma}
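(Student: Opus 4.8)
The plan is to recast both events in terms of the maximum of a sample path and then to prove everywhere pointwise convergence of the associated indicator functions, after which bounded convergence yields the claim. Trivially $\set{S_n(s)=0}=\set{\max_{1\le i\le n}Y_{i/n}\le s}=:A_n$. For the limit event I would exploit the continuity of the paths of $\bfY$: the excursion set $\set{t\in[0,1]:\,Y_t>s}$ is then relatively open in $[0,1]$, and a relatively open subset of $[0,1]$ has Lebesgue measure zero only if it is empty. Hence $S(s)=\int_0^1 1(Y_t>s)\,dt=0$ holds precisely when $Y_t\le s$ for every $t\in[0,1]$, so that $\set{S(s)=0}=\set{\max_{t\in[0,1]}Y_t\le s}=:A$. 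Since each grid point lies in $[0,1]$ we have $A\subseteq A_n$ for all $n$, whence $P(A_n)\ge P(A)$; it remains to control $P(A_n)$ from above.

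Next I would establish $1_{A_n}\to 1_A$ pointwise on the entire sample space. On $A$ this is immediate from $A\subseteq A_n$. On the complement, fix an $\omega$ with $Y_{t_0}(\omega)>s$ for some $t_0\in[0,1]$; continuity of $t\mapsto Y_t(\omega)$ then produces a relatively open interval $J\subseteq[0,1]$ of some positive length $\delta$ on which $Y_\cdot(\omega)>s$. The elementary combinatorial fact I would invoke is that every subinterval of $[0,1]$ of length exceeding $1/n$ contains at least one point of the grid $\set{i/n:\,1\le i\le n}$. Consequently, for every $n>1/\delta$ the grid meets $J$, so $Y_{i/n}(\omega)>s$ for some $i$ and therefore $\omega\notin A_n$. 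Thus $1_{A_n}(\omega)=0$ for all large $n$, in agreement with $1_A(\omega)=0$.

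Since $0\le 1_{A_n}\le 1$, dominated convergence then gives $P(S_n(s)=0)=E(1_{A_n})\to E(1_A)=P(S(s)=0)$, as asserted. The point that needs care is that the events $A_n$ are \emph{not} monotone in $n$, because the grids $\set{i/n}$ do not nest as $n$ grows, so one cannot simply appeal to continuity of the probability measure from above along the sequence. The whole argument therefore hinges on the \emph{everywhere} (not merely almost sure) pointwise convergence of the indicators, which rests on two ingredients: continuity of the paths, turning a single exceedance into an exceedance on an interval of positive length, and the grid-covering fact, which forces that interval to be hit by the discretization for all large $n$. The only points needing a brief check are the boundary points $t=0,1$, where the grid points $1/n$ and $1$ respectively do the work; the clean dichotomy $A=\set{\max_{t\in[0,1]}Y_t\le s}$ versus $A^c=\set{\max_{t\in[0,1]}Y_t>s}$ is itself guaranteed by the openness of the excursion set.
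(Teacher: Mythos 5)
Your proof is correct, but it takes a genuinely different route from the paper's. The paper splits the claim into two one-sided bounds: for the $\limsup$ it uses the convergence in distribution $S_n(s)\to S(s)$ (already established via Riemann sums just before the lemma), writing $P(S_n(s)=0)\le P(S_n(s)\le\varepsilon)\to P(S(s)\le\varepsilon)=P(S(s)=0)+\delta$ with $\varepsilon,\delta$ arbitrarily small; for the $\liminf$ it uses the set identity $\set{S(s)=0}=\bigcap_{n\in\N}\set{S_n(s)=0}$ together with $P\left(\bigcap_n A_n\right)\le\liminf_n P(A_n)$. You instead prove \emph{everywhere} pointwise convergence of the indicators $1_{A_n}\to 1_A$ and conclude by bounded convergence, which delivers both bounds in one stroke. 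The two arguments share the same structural core --- by path continuity a single exceedance spreads to an interval of positive length, so $\set{S(s)=0}=\set{\sup_{t\in[0,1]}Y_t\le s}$ and any exceedance is eventually detected by the grid $\set{i/n:1\le i\le n}$ --- and this is exactly what justifies the paper's set identity as well. What your version buys is self-containedness and the avoidance of the $\varepsilon$--$\delta$ squeeze and the continuity-point caveat of weak convergence; what the paper's version buys is brevity in context, since the distributional convergence of $S_n(s)$ was needed anyway for the discussion surrounding the lemma. Your remark that the events $A_n$ are not nested (so continuity from above is unavailable) is apt; note that the paper sidesteps this by using only $\bigcap_n A_n\subseteq A_m$ for each $m$, which needs no monotonicity.
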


\begin{proof}
We have
\[
P(S_n(s)=0)\le P(S_n(s)\le \varepsilon)\to_{n\to\infty}P(S(s)\le \varepsilon)=P(S(s)=0)+\delta,
\]
where $\varepsilon,\delta > 0$ can be made arbitrarily small. This
implies $\limsup_{n\to\infty}P(S_n(s)=0)\le P(S(s)=0)$. We have,
on the other hand,
\[
P(S(s)=0)=P\left(\bigcap_{n\in\N}\set{S_n(s)=0}\right)\le \liminf_{n\to\infty}P(S_n(s)=0),
\]
which implies the assertion.
\end{proof}

We have
\begin{align*}
S_n(s)&=\frac1n \sum_{i=1}^n 1(F(Y_{i/n})>F(s))\\
&=\frac 1n \sum_{i=1}^n 1(U_{i/n}>c)
\end{align*}
almost surely, where $c:=F(s)$.

Note that
\begin{align*}
FI_n(s)&:=E(nS_n(s)\mid S_n(s)>0)\\
&=E\left(\sum_{i=1}^n 1(U_{i/n} > c)\mid S_n(s)>0\right)\\
&=\sum_{i=1}^n P\left(U_{i/n}> c\mid S_n(s)>0\right)\\
&= \sum_{i=1}^n \frac{P (U_{i/n}> c)}{P\left(S_n(s)>0\right)}\\
&=n \frac{1-c}{1- P\left(S_n(s)=0\right)}
\end{align*}
is the FI of level $s$ corresponding to $Y_{i/n}$, $1\le i\le n$.
For an extensive investigation and extension of the FI we refer to
\citet{falt10a}. The following theorem is the
first main result of this section.

\begin{theorem}\label{thm:limit_of_FI_part_I}
Let $\bfY$ be a stochastic process in $C[0,1]$ with identical
continuous marginal df $F$. Suppose that the copula process
$\bfU=\left(F(Y_t)\right)_{t\in[0,1]}$  corresponding to  $\bfY$
is in the functional domain of attraction of a MSP $\bfeta$ with
generator constant $m\geq1$ as in Proposition
\ref{prop:characterization_of_EP}. Then we have
\[
\lim_{n\to\infty}\lim_{s\nearrow\omega(F)}\frac{FI_n(s)}n= \lim_{s\nearrow\omega(F)}
\lim_{n\to\infty}\frac{FI_n(s)}n =\lim_{s\nearrow\omega(F)}E(S(s)\mid
S(s)>0)= \frac 1m .
\]
\end{theorem}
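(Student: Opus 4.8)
The plan is to reduce every quantity in the statement to the first-order behaviour, as $c:=F(s)\nearrow 1$, of the two ``no-exceedance'' probabilities $P(S_n(s)=0)$ and $P(S(s)=0)$. The identity $FI_n(s)/n=(1-c)/(1-P(S_n(s)=0))$ is already recorded above. For the sojourn term, Fubini gives $E(S(s))=\int_0^1 P(Y_t>s)\,dt=1-c$, and since $S(s)\ge 0$ vanishes off $\set{S(s)>0}$ we obtain $E(S(s)\mid S(s)>0)=E(S(s))/P(S(s)>0)=(1-c)/(1-P(S(s)=0))$. Thus all three expressions are ratios of $1-c$ to a no-exceedance complement, and everything will follow once these complements are expanded to first order in $1-c$.

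The key device is to feed well-chosen step functions into the functional domain-of-attraction characterization \eqref{eqn:equivalent_formulation_of_domain_of_attraction}. For fixed $n$, since $\bfU$ takes values in $\bar C^+[0,1]$ with $U_t\le 1$, the function $f_n:=-\sum_{i=1}^n 1_{\set{i/n}}\in\barE$ satisfies $\set{\bfU-1\le\abs{c-1}f_n}=\set{U_{i/n}\le c,\ 1\le i\le n}=\set{S_n(s)=0}$, the constraint being vacuous off the grid. Applying \eqref{eqn:equivalent_formulation_of_domain_of_attraction} with scaling parameter $c-1\uparrow 0$, and noting $\norm{f_n}_D=E\bigl(\sup_t\abs{f_n(t)}Z_t\bigr)=E\bigl(\max_{1\le i\le n}Z_{i/n}\bigr)$, gives $1-P(S_n(s)=0)=(1-c)\,E(\max_{1\le i\le n}Z_{i/n})+o(1-c)$, whence $\lim_{s\nearrow\omega(F)}FI_n(s)/n=1/E(\max_{1\le i\le n}Z_{i/n})$. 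The analogous computation for the process uses path continuity: for continuous $\bfY$ one has the exact identity $\set{S(s)=0}=\set{\sup_t Y_t\le s}=\set{\bfU\le c\bfone}$, so \eqref{eqn:equivalent_formulation_of_domain_of_attraction} applied to $f=-\bfone$, with $\norm{\bfone}_D=E(\max_{t\in[0,1]}Z_t)=m$, yields $1-P(S(s)=0)=P(S(s)>0)=(1-c)m+o(1-c)$.

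From these two expansions the claimed equalities follow. Note first that, by the preceding lemma $P(S_n(s)=0)\to P(S(s)=0)$, one has already at the pre-limit level $\lim_n FI_n(s)/n=(1-c)/(1-P(S(s)=0))=E(S(s)\mid S(s)>0)$, so the second and third expressions in the theorem coincide term by term and share their $s$-limit. Letting $c\nearrow 1$ in $(1-c)/P(S(s)>0)$ then gives $1/m$, settling $\lim_s\lim_n FI_n(s)/n=\lim_s E(S(s)\mid S(s)>0)=1/m$. For the reverse order I would start from $\lim_s FI_n(s)/n=1/E(\max_{1\le i\le n}Z_{i/n})$ and let $n\to\infty$: by continuity of the generator paths $\max_{1\le i\le n}Z_{i/n}\to\max_{t\in[0,1]}Z_t$ pointwise, and dominated convergence gives $E(\max_{1\le i\le n}Z_{i/n})\to m$, so $\lim_n\lim_s FI_n(s)/n=1/m$ as well.

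The main obstacle I anticipate is twofold. First, the exact set identity $\set{S(s)=0}=\set{\bfU\le c\bfone}$ must be justified carefully from sample-path continuity --- an exceedance above $s$ at any single point forces a positive-length excursion, while a mere touching of the level $s$ contributes nothing to $S(s)$ --- since it is this identity that lets a statement about the integral $S(s)$ be read off the functional distribution $P(\bfU\le c\bfone)$. Second, the interchange $E(\max_{1\le i\le n}Z_{i/n})\to E(\max_{t\in[0,1]}Z_t)=m$ hinges on continuity of $\bfZ$ together with an integrable dominating bound: under the generator normalization $\max_{t\in[0,1]}Z_t=m$ this bound is the constant $m$, while in the general case $m=E(\max_t Z_t)$ one must invoke $E(\max_t Z_t)<\infty$ to apply dominated convergence. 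The remaining steps are routine first-order expansions.
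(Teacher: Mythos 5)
Your proof is correct and follows essentially the same route as the paper: expand the two no-exceedance probabilities to first order in $1-c$, obtain $\lim_{s}FI_n(s)/n=1/E(\max_{1\le i\le n}Z_{i/n})$ for fixed $n$ and $\lim_n FI_n(s)/n=(1-c)/(1-P(S(s)=0))=E(S(s)\mid S(s)>0)$ for fixed $s$, and pass to the remaining limit on each side. The only (harmless, arguably cleaner) deviation is that for fixed $n$ you derive the expansion of $P(S_n(s)=0)$ directly from the functional condition \eqref{eqn:equivalent_formulation_of_domain_of_attraction} applied to the step function $-\sum_{i=1}^n 1_{\{i/n\}}\in\barE$, whereas the paper invokes the multivariate copula expansion \eqref{eqn:expansion_of_copula}; likewise you compute $E(S(s))=1-c$ by Fubini where the paper uses dominated convergence of $E(S_n(s))/P(S_n(s)>0)$ --- both yield the same identities.
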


\begin{proof}
Expansion \eqref{eqn:expansion_of_copula} implies for $n\in\N$
\begin{align*}
&P(S_n(s)>0)\\
&=1-P\left(\sum_{i=1}^n 1(U_{i/n}>c)=0\right)\\
&=1-P(U_{i/n}\le c,\,1\le i\le n)\\
&=1-C_n(c,\dots,c)\\
&=(1-c)\norm{(1,\dots,1)}_{D_n} + o\left((1-c)\norm{(1,\dots,1)}_{D_n}\right)\\
&=(1-c) E\left(\max_{1\le i\le n} Z_{i/n} \right) + o\left((1-c) E\left(\max_{1\le i\le n} Z_{i/n} \right)\right)
\end{align*}
as $c\uparrow 1$ and, thus,
\begin{align*}
\frac{FI_n(s)}n&=\frac{1-c}{P(S_n(s)>0)}\\
&=\frac 1{E\left(\max_{1\le i\le n} Z_{i/n} \right) + o\left( E\left(\max_{1\le i\le n} Z_{i/n}\right)\right)}
\end{align*}
as $c\uparrow 1$. We, thus, obtain
\begin{align*}
\lim_{n\to\infty}\lim_{s\nearrow\omega(F)} \frac{FI_n(s)}n &= \lim_{n\to\infty}\frac 1 {E\left(\max_{1\le i\le n} Z_{i/n} \right)} =\frac 1 {E\left(\max_{0\le t\le 1}Z_t\right)} = \frac 1m.
\end{align*}
We have, on the other hand,
$$
\lim_{n\to\infty}\frac{FI_n(s)}n= \lim_{n\to\infty} \frac{1-c}{1-
P\left(S_n(s)=0\right)}=\frac{1-c}{1- P\left(S(s)=0\right)}.
$$
Since $\bfU\in\mathcal{D}(\bfeta)$, we obtain from the equivalent
condition \eqref{eqn:equivalent_formulation_of_domain_of_attraction}
\begin{align*}
\lim_{s\nearrow\omega(F)}\lim_{n\to\infty}\frac{FI_n(s)}n&= \lim_{s\nearrow\omega(F)}\frac{1-c}{1- P\left(S(s)=0\right)}\\
&=\lim_{s\nearrow\omega(F)}\frac{1-c}{1- P\left(\bfY\leq s\right)}\\
&=\lim_{s\nearrow\omega(F)}\frac{1-c}{1- P\left(\bfU\leq c\right)}\\
&=\lim_{s\nearrow\omega(F)}\frac{1-c}{1- (1-(1-c)\norm{1}_D+o(1-c))}\\
&= \frac{1}{\norm{1}_D}\\
&= \frac{1}{E\left(\max_{0\le t\le 1}Z_t\right)}\\
&=\frac1m,
\end{align*}
where  $1$ is the constant function on $[0,1]$. Moreover, by the dominated convergence theorem
\begin{align*}
\frac{FI_n(s)}n&=E(S_n(s)\mid S_n(s)>0)\\
&=\frac{E(S_n(s))}{P(S_n(s)>0)}\\
&\to_{n\to\infty}\frac{E(S(s))}{P(S(s)>0)}\\
&=E(S(s)\mid S(s)>0).
\end{align*}

\end{proof}

\begin{rem}\label{rem:the_generator_constant_is_uniquely_determined}\upshape
While the generator $\bfZ$ of a standard MSP $\bfeta$ is in general not uniquely determined, the generator constant $m=E\left(\max_{t\in[0,1]}Z_t\right)=\norm 1_D$ is.
\end{rem}

\begin{rem}\upshape
Under the conditions of Theorem \ref{thm:limit_of_FI_part_I} we
have
$$
P(S(s)>0)= (1-c)m+o(1-c)\quad \textrm{as }c\nearrow1\quad \textrm{and}\quad E(S(s)) = 1-F(s).
$$
\end{rem}

To apply the preceding result to generalized Pareto processes defined below,  we add an extension of Theorem \ref{thm:limit_of_FI_part_I}. It is
shown by repeating the preceding arguments.

We call a copula process $\bfU=(U_t)_{t\in[0,1]}$ (upper)
\textit{tail continuous},  if the process
$\bfU_{c_0}:=\left(\max(c_0,U_t)\right)_{t\in[0,1]}$ is a.s.
continuous for some $c_0<1$. Note that in this case $\bfU_c$ is
a.s. continuous for each $c\ge c_0$.

A stochastic process $\bfY=(Y_t)_{t\in[0,1]}$ is said to have
ultimately  identical and continuous marginal df $F_t$, $t\in
[0,1]$, if $F_t(x)=F_s(x)$, $0\le s,t\le 1$, $x\ge x_0$ with
$F_1(x_0)<1$, and $F_1(x)$ is continuous for $x\ge x_0$.

\begin{theorem}\label{thm:limit_of_FI_part_II}
Let $\bfY=(Y_t)_{t\in[0,1]}$ be a stochastic process with
ultimately identical and continuous marginal df. Suppose that the
copula process pertaining to $\bfY$ is tail continuous and that it
is in the functional domain of attraction of a MSP $\bfeta$, whose finite
dimensional marginal distributions are given by
\[
G_{t_1,\dots,t_d}(\bfx)=\exp\left(-E\left(\max_{1\le i\le
d}\abs{x_i}Z_{t_i}\right)\right),
\]
$0\le t_1<\dots<t_d\le 1$, $\bfx\le\bfzero\in\R^d$, $d\in\N$. We
require that the stochastic process $\bfZ=(Z_t)_{t\in[0,1]}$ is
a.s. continuous and that its components satisfy $0\le Z_t\le m$
a.s., $E(Z_t)=1$, $t\in[0,1]$, for some $m\ge 1$. Then we have
\begin{align*}
\lim_{n\to\infty}\lim_{s\nearrow\omega(F)}\frac{FI_n(s)}n&=\lim_{s\nearrow\omega(F)} \lim_{n\to\infty}\frac{FI_n(s)}n\\
&=\lim_{s\nearrow\omega(F)}E(S(s)\mid S(s)>0)\\
&=\frac 1{E\left(\max_{0\le t\le 1}Z_t\right)}.
\end{align*}
 \end{theorem}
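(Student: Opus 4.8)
The plan is to follow the proof of Theorem~\ref{thm:limit_of_FI_part_I} almost verbatim, replacing the single-margin device $c=F(s)$ by the ultimate common upper tail $F(x)=F_t(x)$ for $x\ge x_0$, and tracking where the two hypotheses that were automatic before (continuity of the copula process and the generator constant being $\max_t Z_t$) now have to be supplied by the new assumptions (tail continuity of $\bfU$ and $0\le Z_t\le m$ a.s.). First I would note that, because the marginal df are \emph{ultimately} identical and continuous, for all $s$ with $F(s)\ge F(x_0)$ the events $\{Y_{i/n}>s\}$ and $\{U_{i/n}>c\}$ with $c:=F(s)$ still coincide a.s., so the identities
\[
S_n(s)=\frac1n\sum_{i=1}^n 1(U_{i/n}>c),\qquad \frac{FI_n(s)}n=\frac{1-c}{1-P(S_n(s)=0)}
\]
remain valid for $s$ near $\omega(F)$, which is all we need for the limit $s\nearrow\omega(F)$. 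The preliminary Lemma giving $P(S_n(s)=0)\to P(S(s)=0)$ goes through unchanged, since its proof used only continuity of the sample paths and monotone/dominated convergence; tail continuity of $\bfU_c$ guarantees that $S(s)$ is a genuine a.s.-defined integral of the indicator of a continuous exceedance set for $c\ge c_0$.

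Next I would reproduce the two iterated limits. For $\lim_n\lim_s$, expansion~\eqref{eqn:expansion_of_copula} applied to the copula $C_n$ of $(Y_{i/n})_{1\le i\le n}$ gives
\[
P(S_n(s)>0)=(1-c)E\!\left(\max_{1\le i\le n}Z_{i/n}\right)+o\!\left((1-c)E\!\left(\max_{1\le i\le n}Z_{i/n}\right)\right)
\]
as $c\uparrow1$, whence $FI_n(s)/n\to 1/E(\max_{1\le i\le n}Z_{i/n})$ as $s\nearrow\omega(F)$. Letting $n\to\infty$, the a.s.\ continuity of $\bfZ$ forces $\max_{1\le i\le n}Z_{i/n}\to\max_{0\le t\le1}Z_t$ a.s., and the uniform bound $0\le Z_t\le m$ supplies the integrable dominating function needed to pass the expectation through the limit by dominated convergence, giving $1/E(\max_{0\le t\le1}Z_t)$. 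For $\lim_s\lim_n$, after sending $n\to\infty$ one is left with $(1-c)/(1-P(\bfU\le c))$, and the functional domain-of-attraction condition~\eqref{eqn:equivalent_formulation_of_domain_of_attraction} evaluated at the constant function $f\equiv 1$ yields $1-P(\bfU\le c)=(1-c)\norm1_D+o(1-c)$, so the ratio tends to $1/\norm1_D=1/E(\max_{0\le t\le1}Z_t)$. The chain through $E(S(s))/P(S(s)>0)$ by dominated convergence identifies this common value with $\lim_s E(S(s)\mid S(s)>0)$ exactly as before.

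The main obstacle, and the only place where the new hypotheses do real work, is the interchange of limit and expectation in $E(\max_{1\le i\le n}Z_{i/n})\to E(\max_{0\le t\le1}Z_t)$: in Theorem~\ref{thm:limit_of_FI_part_I} the generator constant $m=\max_{t}Z_t$ was a \emph{pathwise} constant, so the Riemann-type maxima were trivially controlled, whereas here $\bfZ$ is only assumed continuous with the uniform bound $Z_t\le m$. The point to verify carefully is therefore that a.s.\ continuity of $\bfZ$ on the compact $[0,1]$ makes $t\mapsto Z_t$ uniformly continuous on each path, so $\max_{1\le i\le n}Z_{i/n}\uparrow\max_{0\le t\le1}Z_t$ (up to an a.s.\ negligible error), and that $0\le\max_{1\le i\le n}Z_{i/n}\le m$ provides the constant integrable dominator; dominated convergence then closes the gap. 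I would also remark that tail continuity is exactly what is needed so that $S(s)$, $\max_{0\le t\le1}Z_t$ and the limiting MSP are all well defined pathwise on $C[0,1]$ near the threshold, which is why the statement pairs it with the continuity of $\bfZ$.
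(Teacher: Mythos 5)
Your proposal is correct and matches the paper's intent exactly: the paper gives no separate proof of Theorem~\ref{thm:limit_of_FI_part_II}, stating only that it ``is shown by repeating the preceding arguments'' of Theorem~\ref{thm:limit_of_FI_part_I}, which is precisely what you do, and you correctly locate where the new hypotheses (ultimate margins, tail continuity, a.s.\ continuity of $\bfZ$ with $0\le Z_t\le m$) enter, namely in justifying $E(\max_{1\le i\le n}Z_{i/n})\to E(\max_{0\le t\le 1}Z_t)$ by dominated convergence.
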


 \begin{exam}\upshape
 Consider the $d$-dimensional EVD $G(\bfx)=\exp(-\norm{\bfx}_p)$, $\bfx\le\bfzero\in\R^d$, $d\ge 2$, where the $D$-norm is the usual $p$-norm $\norm{\bfx}_D= \left(\sum_{i=1}^d\abs{x_i}^p\right)^{1/p}$ $=\norm{\bfx}_p$, $\bfx\in\R^d$, with $1\le p\le \infty$. This is known as the Gumbel-Hougaard or logistic model. The case $p=\infty$ yields the maximum-norm $\norm{\bfx}_\infty$. Let the rv $(Z_1,\dots,Z_d)$ be a generator of $\norm\cdot_p$, i.e., $0\le Z_i\le c$ a.s., $E(Z_i)=1$, $1\le i\le d$ with some $c\ge 1$, and $\norm{\bfx}_p= E\left(\max_{1\le i\le d}(\abs{x_i}Z_i)\right)$, $\bfx\in\R^d$. The rv $(Z_1,\dots,Z_d)$ can be extended by linear interpolation to a generator $\bfZ=(Z_t)_{t\in[0,1]}$ of a standard MSP $\bfeta$: Put for $i=1,\dots,d-1$
 \[
 Z_{(1-\vartheta)\frac{i-1}{d-1}+\vartheta\frac i{d-1}}:= (1-\vartheta)Z_{i-1}+\vartheta Z_i,\qquad 0\le\vartheta\le 1,
 \]
 which yields a continuous generator $\bfZ=(Z_t)_{t\in[0,1]}$. In this case we have
 \[
 \frac 1{E\left(\max_{0\le t\le 1}Z_t\right)}=\frac 1 {E\left(\max_{1\le i \le d}Z_i\right)} =\frac 1{\norm{(1,\dots,1)}_p} = \frac 1{d^{1/p}},
 \]
 i.e., the generator constant is $d^{1/p}$.

 Note that  a standard MSP $\bfeta$, whose finite dimensional marginal distributions $G_{t_1,\dots,t_d}$ are for each set $0\le t_1<t_2<\dots<t_d\le 1$ and each $d\ge 1$  given by $G_{t_1,\dots,t_d}(\bfx)=\exp(-\norm{\bfx}_p)$, $\bfx\le 0\in\R^d$, does not exist for $p\in[1,\infty)$. This follows from the fact that in this case the generator constant would be infinite. In case $p=\infty$, which is the case of complete dependence, one can choose $\bfeta=(\eta_t)_{t\in[0,1]}$ with $\eta_t:=\eta$, $t\in [0,1]$, where $\eta$ is a rv with standard negative exponential distribution. As a generator one can choose the constant function $Z_t=1$, $t\in[0,1]$.

 \end{exam}

 \begin{exam}[Generalized Pareto Process (GPP)]\label{exam:generalized_Pareto_process}\upshape
Let $\bfZ=(Z_t)_{t\in[0,1]} $ in $\bar C^+[0,1]$ with $0\le Z_t\le m$
a.s., $E(Z_t)=1$, $t\in[0,1]$, for some $m\ge 1$, and let $U$ be a
rv that is uniformly on $(0,1)$ distributed and which is
independent of $\bfZ$. Then the process
\[
\bfY:=\frac 1 U \bfZ
\]
with values in $\bar C^+[0,1]$ is an example of a \textit{generalized Pareto process (GPP)} (cf. \citet{buihz08}), as its univariate margins are (in its upper tails) standard
Pareto distributions:
\begin{align*}
F_t(x)&=P(Z_t\le x U)\\
&=\int_0^m P\left(\frac z x < U\right)\,(P*Z_t)(dz)\\
&=1-\frac 1 x E(Z_t)\\
&=1-\frac 1 x,\qquad x\ge m,\,t\in[0,1].
\end{align*}
Here $P*Z_t$ denotes the probability measure induced by $Z_t$, i.e., $(P*Z_t)(B)=P(Z_t\in B)$ for each $B$ in the Borel-$\sigma$ field of $\R$.

We have, moreover, by Fubini's theorem for all $f\in\barE$ with $\norm f_\infty\le 1/m$
\begin{align*}
P\left(-\frac 1\bfY\le f\right)&=1-\norm f_D,
\end{align*}
i.e., the
GPP $\bfV:=\left(\max(-1/Y_t,M)\right)_{0\le t\le 1}= \left(\max(-U/Z_t,M)\right)_{0\le t\le 1}$, with an arbitrary constant $M<0$, has the property that its df is in its upper
tail equal to
\begin{align*}
W(f)&:=P\left(\bfV\le f\right)  = 1+\log(G(f)),\quad f\in \barE,\,\norm f_\infty\le 1/m,
\end{align*}
where $G(f)=P(\bfeta\le f)$ is the df of the MSP $\bfeta$ with
$D$-norm $\norm\cdot_D$ and generator $\bfZ$ (cf. \citet[Section 4]{aulfaho11}).

The preceding representation of the upper tail of the df of a
GPP $\bfV$ in terms of $1+\log(G)$ is in complete accordance with
the unit- and multivariate case (see, for example, \citet[Chapter
5]{fahure10}).

We call in general a stochastic process $\bfV$ in $\bar C^-[0,1]$ a \textit{standard} GPP, if there is $\varepsilon_0>0,\, M<0$ with $P(\bfV\le f)=P\left(\left(\max(-U/Z_t,M)\right)_{0\le t\le 1}\le f\right)$ for all $f\in\bar E^-[0,1]$ with $\norm f_\infty\le \varepsilon_0$. As $Z_t$ may attain the value zero, we introduce the constant $M$ to ensure finite values of the process.

Note that the copula process pertaining to the GPP $\bfZ/U$ is in its upper tail given by the shifted standard GPP $1+\bfV$, which satisfies the
conditions of Theorem \ref{thm:limit_of_FI_part_II}. We, therefore, obtain for the GPP process $\bfZ/U$
\begin{align*}
\lim_{n\to\infty} \lim_{s\nearrow\omega(F)} \frac{FI_n(s)} n &= \lim_{s\nearrow\omega(F)} E(S(s)\mid S(s)>0)\ =\ \frac 1 {E\left(\max_{0\le t\le 1}Z_t\right)}.
\end{align*}
\end{exam}

The mathematical tools from Section \ref{sec:domain_of_attraction_for_copula_processes} enable also the computation of the (cumulative) expected shortfall corresponding to a stochastic process as defined below.

Let $\bfY=(Y_t)_{t\in[0,1]}$ be a stochastic process in $C[0,1]$ with identical and continuous univariate marginal df $F$ and put
\[
I(s)=\int_0^1 (Y_t-s)1(Y_t>s)\,dt.
\]
The number $I(s)$ can be interpreted as the total sum of excesses above the threshold $s$. The \textit{expected shortfall} at level $s$ pertaining to $\bfY$ is the total sum of excesses, given that there is at least one:
\[
\mathrm{ES}(s):=E(I(s)\mid S(s)>0).
\]

\begin{lemma}\label{lem:expansion_of_expected_shortfall}
Let $\bfU=(U_t)_{t\in[0,1]}=(F(Y_t))_{t\in[0,1]}$ be the copula process pertaining to $\bfY$. Then we have
\[
\mathrm{ES}(s)=\frac{\int_s^\infty 1-F(x)\,dx}{1-P\left(\sup_{t\in[0,1]}U_t\le F(s)\right)}.
\]
\end{lemma}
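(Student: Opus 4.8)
The plan is to write the conditional expectation as a ratio and to evaluate the numerator and denominator separately. First I would observe that the total excess $I(s)=\int_0^1 (Y_t-s)1(Y_t>s)\,dt$ is a nonnegative random variable that vanishes exactly on the event $\{S(s)=0\}$: if $S(s)=0$ then the set $\{t:Y_t>s\}$ is Lebesgue-null, so the integrand is zero a.e.\ and $I(s)=0$, whereas if $S(s)>0$ this set has positive measure. Hence $I(s)\,1(S(s)>0)=I(s)$ almost surely, and therefore
\[
\mathrm{ES}(s)=E(I(s)\mid S(s)>0)=\frac{E\left(I(s)\,1(S(s)>0)\right)}{P(S(s)>0)}=\frac{E(I(s))}{P(S(s)>0)}.
\]
It then remains to identify the two pieces.

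For the numerator I would apply Tonelli's theorem to the nonnegative integrand to interchange expectation and the time integral,
\[
E(I(s))=\int_0^1 E\left((Y_t-s)1(Y_t>s)\right)\,dt.
\]
Since every $Y_t$ has the same marginal df $F$, the inner expectation equals $\int_s^\infty (x-s)\,dF(x)$, independently of $t$. Writing $x-s=\int_s^x dy$ and applying Fubini once more turns this into the standard expected-excess identity $\int_s^\infty (1-F(x))\,dx$, valid in $[0,\infty]$ without any moment assumption. As the inner expectation is constant in $t$ and the time interval has unit length, this yields $E(I(s))=\int_s^\infty (1-F(x))\,dx$, the claimed numerator.

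For the denominator I would pass to the copula process. By continuity of $\bfY$ the event $\{S(s)>0\}$ coincides with $\{\sup_{t\in[0,1]}Y_t>s\}$: a strict exceedance of the continuous path forces an open time-set on which $Y_t>s$, hence $S(s)>0$, and the converse is clear. Using the already noted identity $\int_0^11(Y_t>s)\,dt=\int_0^11(U_t>F(s))\,dt$ with $U_t=F(Y_t)$, together with the continuity of the copula process $\bfU$, I rewrite this event in terms of $\bfU$ as $\{\sup_{t\in[0,1]}U_t>F(s)\}$. Consequently $P(S(s)>0)=1-P\left(\sup_{t\in[0,1]}U_t\le F(s)\right)$, and substituting the two pieces into the ratio gives the assertion.

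I expect the only care to be needed in the denominator step, specifically in matching $\{S(s)>0\}$ with the supremum event, where possible flat stretches of $F$ could in principle create a discrepancy between $\{Y_t>s\}$ and $\{U_t>F(s)\}$. This is resolved by noting that any interval on which $F$ is constant carries no mass, so the two sojourn integrals agree almost surely, exactly as recorded at the start of Section \ref{sec:domain_of_attraction_for_copula_processes}. The numerator computation and the algebraic reduction to a ratio are routine.
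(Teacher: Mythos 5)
Your proof is correct and follows essentially the same route as the paper's: reduce the conditional expectation to the ratio $E(I(s))/P(S(s)>0)$, compute the numerator by Fubini/Tonelli and the expected-excess identity $\int_s^\infty(1-F(x))\,dx$, and rewrite the denominator as $1-P\left(\sup_{t\in[0,1]}U_t\le F(s)\right)$ via the supremum event. The only cosmetic difference is that the paper first replaces the conditioning event by $\{\sup_t Y_t>s\}$ and then drops the indicator, whereas you drop it directly on $\{S(s)>0\}$; your extra remark on flat stretches of $F$ is a welcome point of care that the paper passes over silently.
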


\begin{proof}
We have
\begin{align*}
E(I(s)\mid S(s)>0)&= E\left(\int_0^1 (Y_t-s)1(Y_t>s)\,dt\mid \int_0^1 1(Y_t>s)\,dt>0\right)\\
&= E\left(\int_0^1 (Y_t-s)1(Y_t>s)\,dt\mid \sup_{t\in[0,1]}Y_t>s\right)\\
&= \frac{E\left(\left(\int_0^1 (Y_t-s)1(Y_t>s)\,dt \right) 1\left(\sup_{t\in[0,1]}Y_t>s\right)\right)} {P\left(\sup_{t\in[0,1]}Y_t>s\right)}\\
&= \frac{E\left(\int_0^1 (Y_t-s)1(Y_t>s)\,dt \right)} {P\left(\sup_{t\in[0,1]}Y_t>s\right)},
\end{align*}
where by Fubini's theorem
\begin{align*}
E\left(\int_0^1 (Y_t-s)1(Y_t>s)\,dt \right)&=\int_0^1 E((Y_t-s)1(Y_t>s))\,dt\\
&=\int_0^1 \int_0^\infty 1- P(Y_t-s\le x)\,dx\,dt\\
&=\int_0^1 \int_0^\infty1-F(x+s)\,dx\,dt\\
&=\int_s^\infty 1-F(x)\,dx
\end{align*}
and
\begin{align*}
P\left(\sup_{t\in[0,1]}Y_t>s\right)&=1- P\left(\sup_{t\in[0,1]}Y_t\le s\right)\\
&=1- P\left(\sup_{t\in[0,1]}U_t\le F(s)\right).
\end{align*}
\end{proof}

Suppose in addition that the copula process $\bfU$ is in the domain of attraction in the sense of condition \eqref{eqn:equivalent_formulation_of_domain_of_attraction} of a standard MSP with generator constant $m$. Then there exists a $D$-norm $\norm\cdot_D$ on $C[0,1]$ with $\norm 1_D=m$ such that
\[
P(U_t\le F(s),\,t\in[0,1])=1-(1-F(s))\norm 1_D+o(1-F(s))
\]
as $s\nearrow \omega(F)$. The next result is, therefore, an obvious consequence of Lemma \ref{lem:expansion_of_expected_shortfall}.

\begin{prop}\label{prop:expansion_of_expected_shortfall}
If in addition the copula process $\bfU$ is in the domain of attraction of a standard MSP with generator constant $m$, then we obtain
\[
\mathrm{ES}(s)=\frac{\int_s^\infty 1-F(x)\,dx}{1-F(s)}\left(\frac 1m +o(1-F(s))\right)
\]
as $s\nearrow \omega(F)$.
\end{prop}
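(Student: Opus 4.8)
The plan is to insert the tail expansion of $P\bigl(\sup_{t\in[0,1]}U_t\le F(s)\bigr)$, recorded in the paragraph immediately preceding the statement, into the exact identity of Lemma~\ref{lem:expansion_of_expected_shortfall} and then to simplify the resulting quotient. Recall that Lemma~\ref{lem:expansion_of_expected_shortfall} gives, with no asymptotic approximation,
\[
\mathrm{ES}(s)=\frac{\int_s^\infty 1-F(x)\,dx}{1-P\left(\sup_{t\in[0,1]}U_t\le F(s)\right)},
\]
so the entire content of the statement lies in controlling the denominator as $s\nearrow\omega(F)$.

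First I would observe that $\set{\sup_{t\in[0,1]}U_t\le F(s)}=\set{U_t\le F(s),\,t\in[0,1]}=\set{\bfU\le F(s)}$, with $F(s)$ read as a constant function. Applying the domain-of-attraction condition \eqref{eqn:equivalent_formulation_of_domain_of_attraction} to the constant function $f\equiv-1$ and $c=-(1-F(s))$, which tends to $0$ from below since $F$ is continuous, and using $\norm{-1}_D=\norm 1_D=m$ (the generator constant, uniquely determined by Remark~\ref{rem:the_generator_constant_is_uniquely_determined}), yields exactly the expansion displayed before the statement,
\[
P\left(\sup_{t\in[0,1]}U_t\le F(s)\right)=1-(1-F(s))\,m+o(1-F(s)).
\]
Hence the denominator equals $(1-F(s))\,m+o(1-F(s))$. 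Factoring $1-F(s)$ out of it and writing $o(1-F(s))/(1-F(s))=o(1)$ gives
\[
\mathrm{ES}(s)=\frac{\int_s^\infty 1-F(x)\,dx}{(1-F(s))\,(m+o(1))}=\frac{\int_s^\infty 1-F(x)\,dx}{1-F(s)}\,\frac{1}{m+o(1)},
\]
and expanding the reciprocal via $1/(m+o(1))=1/m+o(1)$ produces the asserted expansion as $s\nearrow\omega(F)$.

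There is no genuine obstacle here; the computation is entirely routine, which is why the result is billed as an immediate consequence of the lemma. The single point deserving attention is the bookkeeping of the Landau terms in passing from the probability expansion to the final quotient — in particular the expansion of $1/(m+o(1))$, which is legitimate precisely because the generator constant satisfies $m\ge1>0$ and is therefore bounded away from zero. One should also keep in mind, as is already implicit in Lemma~\ref{lem:expansion_of_expected_shortfall}, that the numerator $\int_s^\infty (1-F(x))\,dx$ is assumed finite, so that $\mathrm{ES}(s)$ is well defined; the asymptotics above then describe its leading behaviour through the mean-excess factor $\int_s^\infty(1-F(x))\,dx/(1-F(s))$ scaled by $1/m$.
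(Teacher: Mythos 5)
Your proposal is correct and coincides with the paper's own (implicit) proof: the paper offers no separate argument, declaring the result an obvious consequence of Lemma~\ref{lem:expansion_of_expected_shortfall} combined with the expansion $P(U_t\le F(s),\,t\in[0,1])=1-(1-F(s))\norm{1}_D+o(1-F(s))$ displayed just before the statement, which is exactly the substitution you carry out. One small caveat on the Landau bookkeeping you rightly single out: your computation delivers the factor $\frac 1m+o(1)$, whereas the statement writes $\frac 1m+o(1-F(s))$; the latter, stronger inner error term would require the probability expansion to hold with error $o\bigl((1-F(s))^2\bigr)$, which condition \eqref{eqn:equivalent_formulation_of_domain_of_attraction} does not supply, so this discrepancy is an imprecision in the proposition's formulation rather than a gap in your argument.
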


Proposition \ref{prop:expansion_of_expected_shortfall} precisely separates the contribution of the dependence structure of the stochastic process $\bfY$ on the expected shortfall as the threshold increases, which is $1/\norm 1_D=1/m$, from that of the marginal distribution, which is the first factor. In particular we obtain that the expected shortfall converges in $[0,\infty)$ as $s\nearrow\omega(F)$ if and only if $\lim_{s\nearrow\omega(F)}\int_s^{\omega(F)}1-F(t)\,dt/(1-F(s)):=c\in [0,\infty)$. And in this case its limit is $c/\norm{\bfone}_D$.

\section{Sojourn Time Distribution}\label{sec: sojourn_time_distribution}
In this section we compute the asymptotic sojourn time distribution of such processes, which are in a certain neighborhood of a standard GPP. A standard MSP is a prominent example. In this setup we can replace the constant threshold s by a threshold function.

The sojourn time distribution of a standard GPP is easily computed as the following lemma shows. This distribution is independent of the threshold level $s$, which reveals another exceedance stability of a GPP. Note that we replace the constant threshold line $s$ in what follows by a \textit{threshold function} $sf(t)$, where $f\in \barE$ is fixed and $s$ is the variable \textit{threshold level}.

\begin{lemma}\label{lem:sojourn_time_distribution_of_standard_gpp}
Let $\bfV$ in $\bar C^-[0,1]$ be a standard GPP, i.e. there is an $\varepsilon_0>0$ such that $P(\bfV\le g)=P(-U/\bfZ\le g)$ for all $g\in\bar E^-[0,1]$ with $\norm g_\infty\le \varepsilon_0$, where $U$ is uniformly on $(0,1)$
distributed and independent of the generator
$\bfZ=(Z_t)_{t\in[0,1]}$, which is continuous and satisfies $0\le
Z_t\le m$, $E(Z_t)=1$, $t\in [0,1]$, for some $m\ge 1$. Choose $f\in\barE$. Then there is an $s_0>0$ such that the sojourn time
df $H_f$  of $\bfV$ above $sf$ is given by
\begin{align*}
&P\left(\int_0^1 1\left(V_t>s f(t)\right)\,dt> y\mid \int_0^1
1\left(V_t>s f(t)\right)\,dt> 0\right)\\
&= \frac{\int_0^{m\norm f_\infty} P\left(\int_0^1 1\left(\abs{f(t)}Z_t> u\right)\,dt>
y\right)\,du}{\int_0^{m\norm f_\infty} P\left(\int_0^1 1\left(\abs{f(t)}Z_t> u\right)\,dt>
0\right)\,du}\\
&=:1-H_f(y),\qquad 0\le y\le 1,\,0< s\le s_0,
\end{align*}
provided the denominator is greater than zero. Note that $H_f(0)=0$, $H_f(1)=1$.
\end{lemma}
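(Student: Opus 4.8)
The plan is to exploit the defining tail identity $P(\bfV\le g)=P(-U/\bfZ\le g)$ to replace the process $\bfV$ by the explicit representer $-U/\bfZ$ whenever the threshold level is small, and then to carry out the sojourn-time computation directly for $-U/\bfZ$ by conditioning on the generator $\bfZ$ and integrating out the uniform rv $U$.

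First I would fix $s_0:=\min(\varepsilon_0,1/m)/\norm f_\infty$ and take $0<s\le s_0$, so that the threshold function $sf$ satisfies $\norm{sf}_\infty=s\norm f_\infty\le\varepsilon_0$. The key reduction is that the law of the sojourn time $S(s):=\int_0^1 1(V_t>sf(t))\,dt$ is the same whether it is computed from $\bfV$ or from $-U/\bfZ$. To see this I would approximate $S(s)$ by the Riemann sums $S_n(s)=\frac1n\sum_{i=1}^n 1(V_{i/n}>sf(i/n))$. For every $B\subseteq\{1,\dots,n\}$ the lower-orthant probability $P(V_{i/n}\le sf(i/n),\,i\in B)$ is a finite-dimensional marginal of $\bfV$ evaluated at arguments $sf(i/n)\in[-\varepsilon_0,0]$, hence by the GPP identity applied to the step function $g=\sum_{i\in B}sf(i/n)1_{\{i/n\}}\in\barE$, which satisfies $\norm g_\infty\le\varepsilon_0$, it coincides with the corresponding probability for $-U/\bfZ$. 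By inclusion--exclusion these probabilities determine the joint law of the exceedance indicators, so $S_n(s)$ has the same distribution under $\bfV$ and under $-U/\bfZ$ for each $n$. Letting $n\to\infty$ and using the a.s. continuity of $\bfV$ and of $\bfZ$, so that the level sets $\set{t:V_t=sf(t)}$ and $\set{t:\abs{f(t)}Z_t=U/s}$ are Lebesgue-null a.s. and the Riemann sums converge a.s. to the respective integrals, I obtain that $S(s)$ has the same law as $\int_0^1 1(-U/Z_t>sf(t))\,dt$.

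Next comes the core computation for $-U/\bfZ$. Since $f(t)\le0$, the exceedance event rewrites as $-U/Z_t>sf(t)\iff\abs{f(t)}Z_t>U/s$, with the convention that $Z_t=0$ yields no exceedance, consistently on both sides. Writing $g_{\bfZ}(u):=\int_0^1 1(\abs{f(t)}Z_t>u)\,dt$, the sojourn time becomes $S(s)=g_{\bfZ}(U/s)$, and since $\abs{f(t)}Z_t\le m\norm f_\infty$ we have $g_{\bfZ}(u)=0$ for $u\ge m\norm f_\infty$. Conditioning on $\bfZ$, using that $U$ is uniform on $(0,1)$, the substitution $u=u_0/s$ gives, for $y>0$,
\[
P(S(s)>y\mid\bfZ)=\int_0^1 1(g_{\bfZ}(u_0/s)>y)\,du_0=s\int_0^{1/s}1(g_{\bfZ}(u)>y)\,du=s\int_0^{m\norm f_\infty}1(g_{\bfZ}(u)>y)\,du,
\]
where the last equality uses $1/s\ge m\norm f_\infty$, guaranteed by $s\le s_0$, together with the vanishing of the integrand beyond $m\norm f_\infty$. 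Taking expectations over $\bfZ$ and applying Tonelli's theorem to interchange $E_{\bfZ}$ with the $du$-integral yields $P(S(s)>y)=s\int_0^{m\norm f_\infty}P(g_{\bfZ}(u)>y)\,du$, and the identical computation at $y=0$ gives $P(S(s)>0)=s\int_0^{m\norm f_\infty}P(g_{\bfZ}(u)>0)\,du$.

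Finally I would form the conditional survival function $P(S(s)>y\mid S(s)>0)=P(S(s)>y)/P(S(s)>0)$; the common factor $s$ cancels, producing exactly the claimed ratio with $g_{\bfZ}(u)=\int_0^1 1(\abs{f(t)}Z_t>u)\,dt$, which in particular is independent of $s$ on $(0,s_0]$. The boundary values are immediate: $H_f(0)=0$ since the ratio equals $1$ at $y=0$, and $H_f(1)=1$ since $S(s)\le1$ forces $P(S(s)>1)=0$. The main obstacle I anticipate is the reduction step: justifying rigorously that the sojourn-time law is governed only by the tail identity, that is, controlling the passage from the finite-dimensional indicator agreement to the full integral via the Riemann-sum limit, which in turn requires the a.s. negligibility of the boundary level sets so that $S_n(s)\to S(s)$ holds almost surely for both processes.
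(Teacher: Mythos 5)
Your proof is correct and follows essentially the same route as the paper's: replace $\bfV$ by the representer $-U/\bfZ$ on the relevant tail region, condition so as to integrate out $U$, substitute $u\mapsto su$ and truncate the integral at $m\norm f_\infty$, then form the ratio. The only difference is that you spell out the reduction from $\bfV$ to $-U/\bfZ$ via Riemann sums and inclusion--exclusion, a step the paper's proof treats as immediate.
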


\begin{exam}\upshape
Any continuous df $F$ on $[0,1]$ can occur as a sojourn time df. Take $Z_t=1$, $0\le t\le 1$, which provides the case of complete dependence of the margins of the corresponding standard MSP $\bfeta$. Choose a continuous df $F:[0,1]\to[0,1]$ and put $f(t)=F(t)-1$, $0\le t\le 1$. Then the sojourn time df equals $F$, $H_f(y)=F(y)$, $y\in[0,1]$.

If we take, on the other hand, $f(t)=-1$, $t\in[0,1]$, then $H_f$ has all its mass at $1$, i.e., $H_f(y)=0$, $y<1$, and $H_f(1)=1$. These examples show in particular that the sojourn time df $H_f$ can be continuous as well as discrete.
\end{exam}

\begin{proof}
The assertion is an immediate consequence of standard rules of integration
together with conditioning on $U=u$:
\begin{align*}
&P\left(\int_0^1 1\left(V_t>s f(t)\right)\,dt> y\right)\\
&=P\left(\int_0^1 1\left(U<s\abs{f(t)}Z_t\right)\,dt> y\right)\\
&=\int_0^1 P\left(\int_0^1 1\left(u<s\abs{f(t)}Z_t\right)\,dt> y\right)\,
du,\\
\intertext{where substituting $u$ by $su$ yields} &=s
\int_0^{1/s}  P\left(\int_0^1 1\left(\abs{f(t)}Z_t> u\right)\,dt>
y\right)\,du\\
&=  s \int_0^{m\norm f_\infty}P\left(\int_0^1 1\left(\abs{f(t)}Z_t> u\right)\,dt>
y\right)\,du
\end{align*}
if $s\le 1/(m\norm f_\infty)$. This implies the assertion.
\end{proof}

Next we will extend the preceding lemma to processes $\bfxi$ in $\bar C^-[0,1]$ which are in certain neighborhoods of a standard GPP $\bfV$. Precisely, we require that for a given function $f\in\bar E^-_1[0,1]:=\set{f\in\barE:\,\norm f_\infty \le 1}$
\begin{equation}\label{eqn:upper_delta_neighborhood_condition}
P\left(\xi_{t_i}> s f(t_i),\,1\le i\le k\right)=P\left(V_{t_i}>s f(t_i),\,1\le i\le k\right) +o(s)
\end{equation}
for each set $0\le t_1<\dots<t_k\le 1$, $k\in\N$, and
\begin{equation}\label{eqn:lower_delta_neighborhood_condition}
P(\bfxi\le s f)=P(\bfV\le s f)+o(s)
\end{equation}
as $s\downarrow 0$.

An example of a process satisfying conditions \eqref{eqn:upper_delta_neighborhood_condition} and \eqref{eqn:lower_delta_neighborhood_condition} is a standard MSP $\bfeta$, which follows by Lemma \ref{lem:expansion_of_survivor_function_of_eta} below together with equation \eqref{eq:distribution_function_of_standard_EP}. The next lemma follows from elementary computations.

\begin{lemma}\label{cor:expansion_of_df_of_GPP}
For each standard GPP $\bfV$ there exists $s_0>0$ such that for $0\le s\le s_0$ and for each $f\in\barE$ with $\norm f_\infty\le 1$
\begin{itemize}
\item[(i)]
\[
P(\bfV\le sf)=1-sE\left(\max_{t\in[0,1]}\left(\abs{f(t)}Z_t\right)\right)=1-s\norm f_D,
\]
\item[(ii)]
\[
P(\bfV>sf)=sE\left(\min_{t\in[0,1]}\left(\abs{f(t)}Z_t\right)\right),
\]
\item[(iii)]
\[
P\left(V_{t_i}>sf(t_i),\,1\le i\le k\right)= s E\left(\min_{1\le i\le k}\left(\abs{f(t_i)}Z_{t_i}\right)\right)
\]
for each set $0\le t_1<\dots< t_k\le 1$, $k\in\N$.
\end{itemize}
\end{lemma}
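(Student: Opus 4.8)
The plan is to read off all three identities from the defining tail representation of a standard GPP, using that for small $s$ the truncation level $M$ is inactive and that $U$ is uniform on $(0,1)$ and independent of $\bfZ$. Set $s_0:=\min(\varepsilon_0,\abs M,1/m)$, which depends on the GPP only through $\varepsilon_0$, $M$ and $m$ and is hence uniform over all admissible $f$, and fix $0<s\le s_0$ and $f\in\barE$ with $\norm f_\infty\le1$. Since $\norm{sf}_\infty\le s\le\varepsilon_0$, the definition of a standard GPP lets me replace $\bfV$ by the explicit model process $\bfV^\ast:=(\max(-U/Z_t,M))_{t\in[0,1]}$, whose distribution function coincides with that of $\bfV$ at every argument $g\in\barE$ with $\norm g_\infty\le\varepsilon_0$. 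Moreover $sf(t)\ge -s\ge M$ for every $t$ (using $\abs{f(t)}\le1$ and $s\le\abs M$), so on the relevant events the maximum with $M$ is inactive: $\max(-U/Z_t,M)\le sf(t)$ is equivalent to $-U/Z_t\le sf(t)$, and on $\set{\max(-U/Z_t,M)>sf(t)}$ one has $\max(-U/Z_t,M)=-U/Z_t$. Thus I may compute throughout with $-U/\bfZ$ in place of $\bfV$.

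The next step is to rewrite the coordinate events as events on $U$. For $Z_t>0$ the inequality $-U/Z_t\le sf(t)$ is equivalent to $U\ge s\abs{f(t)}Z_t$ (recall $f(t)\le0$), and dually $-U/Z_t>sf(t)$ to $U<s\abs{f(t)}Z_t$; under the convention $-U/0=-\infty$ both equivalences persist when $Z_t=0$, since then the right-hand bound is $0$ and $U>0$ a.s. For (i) the event $\set{\bfV\le sf}$ is the genuine distribution function, already supplied by the definition, and it becomes $\set{U\ge s\sup_{t\in[0,1]}(\abs{f(t)}Z_t)}$. Writing $W:=\sup_{t\in[0,1]}(\abs{f(t)}Z_t)$, which is a bounded random variable with $0\le W\le m$ by $\abs{f(t)}\le1$ and $0\le Z_t\le m$, I condition on $\bfZ$ and use $s\le1/m$ to get $P(U\ge sW\mid\bfZ)=1-sW$; taking expectations gives $P(\bfV\le sf)=1-sE(W)=1-s\norm f_D$, which is (i).

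For (iii) the event $\set{V_{t_i}>sf(t_i),\,1\le i\le k}$ is finite dimensional; its probability equals the corresponding one for $\bfV^\ast$ by inclusion--exclusion applied to the finite-dimensional tail distribution functions that the definition provides, and hence equals $P(-U/Z_{t_i}>sf(t_i),\,1\le i\le k)=P\bigl(U<s\min_{1\le i\le k}(\abs{f(t_i)}Z_{t_i})\bigr)$. Conditioning on $\bfZ$ and using $\min_{1\le i\le k}(\abs{f(t_i)}Z_{t_i})\le m$ together with $s\le1/m$ yields $P(U<sW'\mid\bfZ)=sW'$ with $W':=\min_{1\le i\le k}(\abs{f(t_i)}Z_{t_i})$, and taking expectations gives (iii). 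Finally (ii) follows from (iii) by a limiting argument: choosing finite sets $\set{t_1,\dots,t_k}$ that increase to a countable dense subset of $[0,1]$ enlarged by the finitely many jump points of $f$, continuity of $\bfZ$ forces $\set{\bfV>sf}=\bigcap\set{V_{t_i}>sf(t_i)}$ and $\min_{1\le i\le k}(\abs{f(t_i)}Z_{t_i})\downarrow\inf_{t\in[0,1]}(\abs{f(t)}Z_t)$, so dominated convergence turns (iii) into $P(\bfV>sf)=sE(\min_{t\in[0,1]}(\abs{f(t)}Z_t))$.

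The one point requiring care---rather than a true obstacle---is exactly this passage in (ii) from the finite-dimensional exceedance probabilities to the probability of the uncountable event $\set{\bfV>sf}$, i.e.\ the measurability of $\sup_{t}$ and $\inf_{t}$ of $\abs{f(t)}Z_t$ and the reduction of the intersection over $t\in[0,1]$ to a countable subfamily; both are routine consequences of the continuity of $\bfZ$ and the finiteness of the discontinuity set of $f$. Everything else reduces to the elementary fact that a uniform $U$ satisfies $P(U\ge x)=1-x$ and $P(U<x)=x$ for $0\le x\le1$, which is why the lemma is recorded as following from elementary computations.
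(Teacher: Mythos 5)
The paper records this lemma without proof (``follows from elementary computations''), so there is nothing to compare against; judged on its own, your argument for (i) and (iii) is correct and is surely the intended computation: for $s\le s_0:=\min(\varepsilon_0,\abs M,1/m)$ replace $\bfV$ by the model process $(\max(-U/Z_t,M))_{t\in[0,1]}$, note that the truncation at $M$ is inactive and that $0\le s\sup_t(\abs{f(t)}Z_t)\le sm\le 1$, and integrate out the uniform variable. Your reduction of the fidis and of the finite-dimensional survival probabilities to the df on $\set{g\in\barE:\norm g_\infty\le\varepsilon_0}$ (via point-mass functions and inclusion--exclusion) is also correct.

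Part (ii), however, contains a genuine gap. The asserted identity $\set{\bfV>sf}=\bigcap_i\set{V_{t_i}>sf(t_i)}$, with the $t_i$ running through a countable dense set $D$ containing the jump points of $f$, is false: only the inclusion ``$\subseteq$'' holds. A continuous path may satisfy $V_t>sf(t)$ for every $t\in D$ and still touch the threshold, $V_{t^*}=sf(t^*)$, at some $t^*\notin D$; continuity of $\bfV$ and of $f$ off its jump set only shows that the difference of the two events is contained in this touching event, not that it is empty. Since the law of $\bfV$ is prescribed only through its df on small elements of $\barE$, you cannot invoke the atomlessness of $U$ directly to declare the touching event null --- that is precisely the step that needs an argument. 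Two repairs. (a) Sandwich in $s$: if $f$ vanishes anywhere both sides of (ii) are $0$, so assume $f<0$ everywhere; for $0<s<s'\le s_0$ one has $\bigcap_{t\in D}\set{V_t>sf(t)}\subseteq\set{\bfV>s'f}\subseteq\bigcap_{t\in D}\set{V_t>s'f(t)}$ (the first inclusion by continuity of $\bfV$ and of $f$ off its jumps), and your computation of the countable-intersection probabilities gives $sE\bigl(\inf_t(\abs{f(t)}Z_t)\bigr)\le P(\bfV>s'f)\le s'E\bigl(\inf_t(\abs{f(t)}Z_t)\bigr)$; now let $s\uparrow s'$. (b) Alternatively, note that the events $\set{\bfV\le g}$, $g\in\barE$, $\norm g_\infty\le\varepsilon_0$, form a $\pi$-system which determines the full law on $C[0,1]$ of the truncated path process $(\max(V_t,-\varepsilon_0))_{t\in[0,1]}$; since $\set{\bfV>sf}$ is a Borel event of that truncated path, $P(\bfV>sf)=P\bigl(U<s\inf_t(\abs{f(t)}Z_t)\bigr)$ and atomlessness of $U$ finishes the computation. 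With either repair your proof is complete.
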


The next result extends Lemma \ref{lem:sojourn_time_distribution_of_standard_gpp} to processes which satisfy condition \eqref{eqn:upper_delta_neighborhood_condition} and \eqref{eqn:lower_delta_neighborhood_condition}.

\begin{prop}\label{prop:sojourn_time_distribution_for_neighborhoods}
Suppose that $\bfxi\in\bar C^-[0,1]$ has identical univariate margins and that it satisfies condition \eqref{eqn:upper_delta_neighborhood_condition} as well as \eqref{eqn:lower_delta_neighborhood_condition}. Choose $f\in\bar E_1^-[0,1]$. Then the asymptotic sojourn time distribution of $\bfxi$, conditional on the assumption that it is positive,  is given by
\[
P\left(\int_0^1 1\left(\xi_t>sf(t)\right)\,dt>y\mid \int_0^1 1\left(\xi_t>sf(t)\right)\,dt>0\right)\to_{s\downarrow 0}1-H_f(y),
\]
where the sojourn time df $H_f$ is given in Lemma \ref{lem:sojourn_time_distribution_of_standard_gpp}.
\end{prop}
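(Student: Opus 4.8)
The plan is to write the conditional probability as the ratio
\[
\frac{P\left(\int_0^1 1(\xi_t>sf(t))\,dt>y\right)}{P\left(\int_0^1 1(\xi_t>sf(t))\,dt>0\right)}
\]
and to show that, as $s\downarrow 0$, numerator and denominator each agree, up to an error $o(s)$, with the corresponding quantities for the standard GPP $\bfV$. Since Lemma \ref{lem:sojourn_time_distribution_of_standard_gpp} evaluates this ratio for $\bfV$ exactly (and independently of $s$) as $1-H_f(y)$, dividing and letting $s\downarrow 0$ will then give the claim, provided the denominator is of exact order $s$ with a strictly positive constant. Throughout write $S(s):=\int_0^1 1(\xi_t>sf(t))\,dt$, let $S^{\bfV}(s)$ be the analogous sojourn time of $\bfV$, and let $\bar F(x):=P(\xi_t>x)$ be the common marginal survivor function.

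I would treat the numerator through moments, which is the clean part since they are \emph{exactly} finite integrals of joint exceedance probabilities. By Fubini's theorem,
\[
E\left(S(s)^k\right)=\int_{[0,1]^k}P\left(\xi_{t_i}>sf(t_i),\,1\le i\le k\right)\,d(t_1,\dots,t_k),\qquad k\in\N.
\]
The diagonal is a Lebesgue null set, so on the relevant set the coordinates are distinct and condition \eqref{eqn:upper_delta_neighborhood_condition} replaces the integrand by its GPP counterpart plus $o(s)$, the counterpart being $sE\left(\min_{1\le i\le k}(\abs{f(t_i)}Z_{t_i})\right)$ by Lemma \ref{cor:expansion_of_df_of_GPP}(iii). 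To carry the $o(s)$ through the integral I would invoke dominated convergence: the integrand divided by $s$ is bounded by $s^{-1}\bar F(sf(t_1))$, and the case $k=1$ of \eqref{eqn:upper_delta_neighborhood_condition} together with Lemma \ref{cor:expansion_of_df_of_GPP} gives $\bar F(-x)/x\to 1$ as $x\downarrow 0$, whence $s^{-1}\bar F(sf(t_1))\le 2\norm f_\infty$ for small $s$, a constant (hence integrable) dominating function. This yields $E(S(s)^k)=E(S^{\bfV}(s)^k)+o(s)$ for every $k$. Granting the denominator to be of exact order $s$, the conditional moments $E(S(s)^k\mid S(s)>0)=E(S(s)^k)/P(S(s)>0)$ then converge to the moments of $H_f$; as $H_f$ is a law on the compact interval $[0,1]$ it is determined by its moments, so the conditional law of $S(s)$ given positivity converges weakly to $H_f$, which is the assertion at every continuity point of $H_f$.

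The denominator $P(S(s)>0)$ is the delicate part, because it is not captured by the integer moments and must be handled directly. For a continuous process $\bfxi$ and $f\in\barE$, the event $\{S(s)>0\}$ coincides, outside a null event, with $\{\bfxi\not\le sf\}$, the only discrepancy sitting at the finitely many discontinuities of $f$, where an isolated exceedance produces no sojourn time. On one side, condition \eqref{eqn:lower_delta_neighborhood_condition} together with Lemma \ref{cor:expansion_of_df_of_GPP}(i) gives $P(\bfxi\le sf)=1-s\norm f_D+o(s)$. On the other side, for any finite grid of continuity points $t_1<\dots<t_N$ one has $\{\exists i:\xi_{t_i}>sf(t_i)\}\subseteq\{S(s)>0\}$, and expanding the lower orthant probability $P(\xi_{t_i}\le sf(t_i),\,1\le i\le N)$ by inclusion--exclusion reduces it to the joint survival probabilities governed by \eqref{eqn:upper_delta_neighborhood_condition}; refining the grid and using continuity of $\bfZ$ produces the matching bound. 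Together these should give $P(S(s)>0)=P(S^{\bfV}(s)>0)+o(s)$ with $P(S^{\bfV}(s)>0)=s\,E\left(\operatorname{ess\,sup}_{t\in[0,1]}(\abs{f(t)}Z_t)\right)$, which is of exact order $s$ and strictly positive under the hypotheses.

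I expect the main obstacle to lie precisely in this denominator: conditions \eqref{eqn:upper_delta_neighborhood_condition} and \eqref{eqn:lower_delta_neighborhood_condition} are pointwise statements about finite-dimensional marginals, so converting them into a statement about the integral functional $P(S(s)>0)$ requires controlling the grid-refinement limit \emph{uniformly in} $s$, and, in tandem, showing that the contribution of the finitely many discontinuities of $f$ (isolated exceedances, which inflate $\norm f_D$ but add nothing to the sojourn time) matches exactly that of the GPP. Once $P(S(s)>0)=P(S^{\bfV}(s)>0)+o(s)$ is secured, the division together with Lemma \ref{lem:sojourn_time_distribution_of_standard_gpp} completes the argument.
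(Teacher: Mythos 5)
Your proposal is correct in substance and its computational core coincides with the paper's: the moments $E\left(S(s)^k\right)$ are evaluated exactly as in the paper, namely by Fubini's theorem as $k$-fold integrals of the joint survival probabilities, with condition \eqref{eqn:upper_delta_neighborhood_condition} and the same constant dominating function $P(\xi_0>-s)/s$ justifying the passage to the limit, and the denominator is fed by condition \eqref{eqn:lower_delta_neighborhood_condition}. The only genuine divergence is the concluding device: you invoke moment determinacy on $[0,1]$ (Hausdorff) to pass from convergence of conditional moments to weak convergence, whereas the paper expands the conditional characteristic function $E\left(\exp(itI_s)\mid I_s>0\right)$ into its moment series and controls the tail of the series by a truncation argument before letting $s\downarrow 0$. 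These are interchangeable here, and your route is arguably the leaner one since it avoids the $\varepsilon$-truncation. Where you part company with the paper is the denominator: the paper simply writes $P(I_s>0)=1-P(\bfxi\le sf)$ and applies \eqref{eqn:lower_delta_neighborhood_condition} together with Lemma \ref{cor:expansion_of_df_of_GPP}(i) to get $P(I_s>0)=s\left(\norm f_D+o(1)\right)$ in one line, with no grid refinement and no inclusion--exclusion. The subtlety you flag --- that $\set{S(s)>0}$ and $\set{\bfxi\not\le sf}$ can differ on an event of positive probability when $\abs f$ has an upward jump at one of its finitely many discontinuities, so that $\norm f_D$ may exceed the normalizing constant $E\bigl(\operatorname{ess\,sup}_{t\in[0,1]}(\abs{f(t)}Z_t)\bigr)$ appearing in Lemma \ref{lem:sojourn_time_distribution_of_standard_gpp} --- is real and is silently ignored by the paper (it is vacuous for continuous $f$, which is the intended case). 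Your grid argument would only deliver the $\operatorname{ess\,sup}$ lower bound and therefore cannot close the gap to $\norm f_D$ for such $f$; so this last step of your write-up, which you yourself leave as a sketch, should either be replaced by the paper's direct identification $\set{I_s=0}=\set{\bfxi\le sf}$ (accepting its implicit restriction on $f$) or accompanied by the observation that both normalizations agree whenever the supremum of $\abs f Z$ is a.s.\ attained at continuity points of $f$.
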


\begin{proof}
We establish this result by establishing convergence of characteristic functions. Put $I_s:=\int_0^1 1(\xi_t> s f(t))\,dt$, $s>0$. The characteristic function of the rv $I_s$, conditional on the event that it is positive, is
\[
E\left(\exp\left(itI_s\right) \mid I_s>0\right)=\frac{\int_{\set{I_s>0}}\exp(it I_s)\,dP}{P(I_s>0)}.
\]
Note that $0\le I_s\le 1$. By the dominated convergence theorem we have
\begin{align}\label{eqn:expansion_of_characteristic_function_of_sojourn_time}
\int_{\set{I_s>0}}\exp(it I_s)\,dP&= \int_{\set{I_s>0}}\sum_{k=0}^\infty \frac{(itI_s)^k}{k!} \,dP\nonumber\\
&=\sum_{k=0}^\infty \frac{(it)^k}{k!}  \int_{\set{I_s>0}} I_s^k\,dP\nonumber\\
&=P(I_s>0) + \sum_{k=1}^\infty \frac{(it)^k}{k!}  \int_\Omega I_s^k\,dP\nonumber\\
&=P(I_s>0) + \sum_{k=1}^\infty \frac{(it)^k}{k!} E\left(I_s^k\right).
\end{align}

From condition \eqref{eqn:lower_delta_neighborhood_condition} we obtain
\begin{align}\label{eqn:expansion_of_probability_that_I_s_< 0}
P(I_s>0)&=1-P(I_s=0)\nonumber\\
&=1-P(\bfxi\le sf)\nonumber\\
&=1-P(\bfV\le sf)+o(s)\nonumber\\
&=s\left(E\left(\max_{t\in[0,1]}\abs{f(t)Z_t}\right)+o(1)\right)
\end{align}
as $s\downarrow 0$.

From Fubini's theorem we obtain for $k\in\N$
\begin{align*}
E(I_s^k)&=E\left(\left(\int_0^1 1(\xi_t> s f(t))\,dt\right)^k\right)\\
&=E\left(\int_0^1\dots\int_0^1 \prod_{i=1}^k 1\left(\xi_{t_i}> s f(t_i)\right)\,dt_1\dots dt_k\right)\\
&=\int_0^1\dots\int_0^1 E\left( \prod_{i=1}^k 1\left(\xi_{t_i}> s f(t_i)\right)\right)\,dt_1\dots dt_k\\
&=\int_0^1\dots\int_0^1 P\left(\xi_{t_i}>s f(t_i),\,1\le i\le k\right) \,dt_1\dots dt_k.
\end{align*}
We have by condition \eqref{eqn:upper_delta_neighborhood_condition}
\[
P\left(\xi_{t_i}>s f(t_i),\,1\le i\le k\right)\le P(\xi_{t_1}>-s)=P(\xi_0>-s)=P(V_0>-s)+o(s)
\]
uniformly for $t_1,\dots,t_k\in[0,1]$ and, thus, $P\left(\xi_{t_i}>s f(t_i),\,1\le i\le k\right)/s$ is uniformly bounded. Condition \eqref{eqn:upper_delta_neighborhood_condition} together with the dominated convergence theorem now implies
\begin{align}\label{eqn:expansion_of_E(I_s^k)}
\frac{E(I_s^k)} s &= \int_0^1\dots\int_0^1 \frac{P\left(\xi_{t_i}>s f(t_i),\,1\le i\le k\right)}s \,dt_1\dots dt_k \nonumber\\
&\to_{s\downarrow 0}\int_0^1\dots\int_0^1 E\left(\min_{1\le i\le k}\abs{f(t_i)Z_{t_i}}\right)  \,dt_1\dots dt_k.
\end{align}

From equations \eqref{eqn:expansion_of_characteristic_function_of_sojourn_time}-\eqref{eqn:expansion_of_E(I_s^k)} we obtain
\begin{align*}
&\int_{\set{I_s>0}}\exp(itI_s)\,dP\\
&=s(1+o(1))\left(E\left(\max_{t\in[0,1]}\abs{f(t)Z_t}\right)\right.\\
&\hspace*{1cm} + \left.\sum_{k=1}^n \frac{(it)^k}{k!} \left(\int_0^1\dots\int_0^1 E\left(\min_{1\le i\le k} \abs{f(t_i)Z_{t_i}}\right)\,dt_1\dots dt_k\right)\right)\\
&\hspace*{.5cm} + \sum_{k=n+1}^\infty \frac{(it)^k}{k!} E(I_s^k),
\end{align*}
where $n\in\N$ is chosen such that for a given $\varepsilon>0$ we have $\sum_{k=m+1}^\infty 1/k!\le\varepsilon$. As $I_s\in[0,1]$, we obtain
\begin{align*}
E(I_s^k)&\le E(I_s)\\
&=\int_0^1 P(\xi_t>s f(t))\,dt\\
&=\int_0^1 P(\xi_0> s f(t))\,dt\\
&\le P\left(\xi_0>s \inf_{t\in[0,1]}f(t)\right)\\
&= s \inf_{t\in[0,1]}\abs{f(t)}+o(s)
\end{align*}
by condition \eqref{eqn:upper_delta_neighborhood_condition} and, thus,
\begin{align*}
&\int_{\set{I_s>0}}\exp(itI_s)\,dP\\
&=s(1+o(1))\left(E\left(\max_{t\in[0,1]}\abs{f(t)Z_t}\right)\right.\\
&\hspace*{1cm} + \left.\sum_{k=1}^n \frac{(it)^k}{k!} \left(\int_0^1\dots\int_0^1 E\left(\min_{1\le i\le k} \abs{f(t_i)Z_{t_i}}\right)\,dt_1\dots dt_k\right)+O(\varepsilon)\right)\\
\end{align*}
as $s\downarrow 0$. Since $\varepsilon>0$ was arbitrary we obtain
\begin{align*}
&\lim_{s\downarrow 0}\frac{\int_{\set{I_s>0}}\exp(itI_s)\,dP}{P(I_s>0)}\\
&=1+ \frac{\sum_{k=1}^\infty \frac{(it)^k}{k!} \left(\int_0^1\dots\int_0^1 E\left(\min_{1\le i\le k} \abs{f(t_i)Z_{t_i}}\right)\,dt_1\dots dt_k\right)} {E\left(\max_{t\in[0,1]}\abs{f(t)Z_t}\right)}\\
&=:\varphi(t),\qquad t\in\R.
\end{align*}
An inspection of the preceding arguments shows that $\varphi$ is the characteristic function of the sojourn time df $H_f$, which completes the proof.
\end{proof}

We conclude this section by showing that a standard MSP $\bfeta$ satisfies condition \eqref{eqn:upper_delta_neighborhood_condition} and, thus, Proposition \ref{prop:sojourn_time_distribution_for_neighborhoods} applies. Note that condition \eqref{eqn:lower_delta_neighborhood_condition} follows from \eqref{eq:distribution_function_of_standard_EP} and Taylor expansion of $\exp$.

\begin{lemma}\label{lem:expansion_of_survivor_function_of_eta}
Let $\bfeta$ be a standard MSP with generator $\bfZ$. Then we
obtain for $f\in\bar E^-[0,1]$
\[
P\left(\eta_{t_i}> s f(t_i),\,1\le i\le k\right) = sE\left(\min_{1\le i\le k}\left(\abs{f(t_i)}Z_{t_i}\right)\right) + o(s)
\]
as $s\downarrow 0$ for any set $0\le t_1<\dots<t_k\le 1$, $k\in\N$.
\end{lemma}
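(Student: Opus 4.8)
The plan is to compute the joint survivor function exactly by inclusion--exclusion, substitute the $D$-norm expression for the fidis, and then expand in powers of $s$.

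First I would set $A_i:=\set{\eta_{t_i}\le sf(t_i)}$ (note $sf(t_i)\le 0$ since $f\le 0$ and $s>0$, so these events are governed by the fidis of $\bfeta$) and write the survivor function as the probability of a complementary intersection,
\[
P\left(\eta_{t_i}>sf(t_i),\,1\le i\le k\right)=P\left(\bigcap_{i=1}^k A_i^c\right)=\sum_{I\subseteq\set{1,\dots,k}}(-1)^{\abs I}P\left(\bigcap_{i\in I}A_i\right),
\]
the empty set contributing the term $1$. By the fidi representation \eqref{eqn:definition_of_D-norm} each non-empty term is
\[
P\left(\eta_{t_i}\le sf(t_i),\,i\in I\right)=\exp\left(-sE\left(\max_{i\in I}\left(\abs{f(t_i)}Z_{t_i}\right)\right)\right)=:\exp(-s\,g_I),
\]
where I abbreviate $g_I:=E(\max_{i\in I}(\abs{f(t_i)}Z_{t_i}))$ and put $g_\emptyset:=0$.

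Next I would Taylor expand $\exp(-sg_I)=1-sg_I+O(s^2)$ as $s\downarrow0$. Since $0\le Z_t\le m$ and $f$ is bounded, every $g_I$ is finite, and as there are only finitely many index sets the remainder is uniformly $O(s^2)=o(s)$. The constant terms cancel because $\sum_{I\subseteq\set{1,\dots,k}}(-1)^{\abs I}=(1-1)^k=0$ for $k\ge1$, leaving
\[
P\left(\eta_{t_i}>sf(t_i),\,1\le i\le k\right)=-s\sum_{I\subseteq\set{1,\dots,k}}(-1)^{\abs I}g_I+o(s)=s\sum_{\emptyset\neq I}(-1)^{\abs I+1}g_I+o(s).
\]

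Finally I would identify the bracketed coefficient with the claimed minimum. Writing $W_i:=\abs{f(t_i)}Z_{t_i}\ge0$ and using linearity of expectation, it suffices to prove the pointwise max--min inclusion--exclusion identity
\[
\sum_{\emptyset\neq I\subseteq\set{1,\dots,k}}(-1)^{\abs I+1}\max_{i\in I}w_i=\min_{1\le i\le k}w_i,\qquad w_1,\dots,w_k\ge0.
\]
This I would verify through the layer-cake representation $\max_{i\in I}w_i=\int_0^\infty 1(\exists\,i\in I:\,w_i>u)\,du$: swapping the finite sum with the integral, the alternating sum of indicators is evaluated pointwise in $u$, where it collapses to $1(w_i>u\text{ for all }i)$, so the integral becomes $\int_0^\infty 1(\min_i w_i>u)\,du=\min_i w_i$. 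Taking expectations and inserting into the previous display yields the assertion.

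I expect the main obstacle to be purely bookkeeping: fixing the signs in the two nested inclusion--exclusions and confirming that the constant terms cancel while the quadratic remainders are negligible. The integrability and uniformity needed for the $o(s)$ control are immediate from $0\le Z_t\le m$, so the one genuinely non-mechanical ingredient is the identification of the linear coefficient as $E(\min_i W_i)$ via the max--min identity, which the layer-cake argument settles without further combinatorics.
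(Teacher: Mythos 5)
Your argument is correct and follows essentially the same route as the paper's proof: inclusion--exclusion over the events $\set{\eta_{t_i}\le sf(t_i)}$, substitution of the $D$-norm form of the fidis, extraction of the first-order coefficient in $s$, and the alternating max--min identity $\sum_{\emptyset\neq I}(-1)^{\abs I+1}\max_{i\in I}w_i=\min_i w_i$. The only (harmless) differences are that you obtain the linear term by an explicit Taylor expansion with $O(s^2)$ remainder where the paper differentiates the finite sum $H$ at $0$, and that you prove the max--min identity by a layer-cake argument where the paper cites induction.
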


\begin{proof} The inclusion-exclusion theorem yields
\begin{align*}
&P\left(\bigcap_{i=1}^k\set{\eta_{t_i}>sf(t_i)}\right)\\
&=1- P\left(\bigcup_{i=1}^k\set{\eta_{t_i}\le sf(t_i)}\right)\\
&=1-\sum_{\emptyset\not= T\subset\set{1,\dots,k}} (-1)^{\abs T - 1} P\left(\bigcap_{i\in T}\set{\eta_{t_i}\le sf(t_i)}\right)\\
&=1-\sum_{\emptyset\not= T\subset\set{1,\dots,k}} (-1)^{\abs T - 1} \exp\left(- s E\left( \max_{i\in T}\left(\abs{f(t_i)}Z_{t_i}\right)\right)\right)\\
&=:1-H(s)\\
&=H(0)-H(s),
\end{align*}
where the function $H$ is differentiable and, thus,
\begin{align*}
\lim_{s\downarrow 0}\frac{P\left(\eta_{t_i}> s f(t_i),\,1\le i\le k\right)}s&= -\lim_{s\downarrow 0}\frac{H(s)-H(0)}s\\
&=-H'(0)\\
&= \sum_{\emptyset\not= T\subset\set{1,\dots,k}} (-1)^{\abs T - 1}   E\left( \max_{i\in T}\left(\abs{f(t_i)}Z_{t_i}\right)\right)\\
&= E\left( \min_{i\in T}\left(\abs{f(t_i)}Z_{t_i}\right)\right),
\end{align*}
since $\sum_{\emptyset\not= T\subset\set{1,\dots,k}} (-1)^{\abs T - 1} \max_{i\in T}a_i=\min_{1\le i\le k}a_i$ for arbitrary numbers $a_1,\dots,a_k\in\R$, which can be seen by induction.
\end{proof}

\section{Excursion Time}\label{sec:cluster_length}
The considerations in the previous section enable us also to compute the limit distribution of the excursion time above the threshold $sf$ of a process $\bfX$ in $\bar C^-[0,1]$, which is in a neighborhood of a standard GPP. Precisely, we require the following condition. Choose $0\le a\le b\le 1$, and denote by $\bar C^-[a,b]$  the set of continuous functions $f:[a,b]\to (-\infty,0]$. We suppose that for $f\in \bar C^-[a,b]$
\begin{equation}\label{cond:neighborhood_of_gpp}
P\left(X_t>sf(t),\,t\in[a,b]\right)=P\left(V_t>sf(t),\,t\in[a,b]\right)+o(s)
\end{equation}
as $s\downarrow 0$, where $\bfV=(V_t)_{t\in[0,1]}$ is a standard GPP. Note that
\begin{equation}\label{eqn:condition_for_excursion_time}
P(V_t>sf(t),\,t\in[a,b])=s E\left(\min_{a\le t\le b}\left(\abs{f(t)}Z_t\right)\right) + o(s),\quad s\in(0,s_0),
\end{equation}
and that we allow the case $a=b$. We do not require $\bfX$ to have identical marginal distributions.

A standard MSP $\bfeta$ satisfies condition \eqref{cond:neighborhood_of_gpp}, see \citet{aulfaho11}. Another example is the following class of processes. Substitute the rv $U$ in the GPP $\bfV=(\max(-U/Z_t,M))_{t\in[0,1]}$ by a rv $W\ge 0$, which is independent of $\bfZ$ as well and whose df $H$ satisfies
\[
H(x)=x+o(x),\qquad \mbox{as }x\to 0.
\]
The standard exponential df $F(x)=1-\exp(-x)$, $x>0$, is a typical example. Then the process
\[
\bfX:=\left(\max\left(-\frac W {Z_t},M\right)\right)_{t\in[0,1]}
\]
satisfies condition \eqref{eqn:condition_for_excursion_time} as well.

The \textit{remaining excursion time} above $sf$ of the process $\bfX$ with inspection point $t_0\in[0,1)$ is  the remaining time that the process spends above $sf$, under the condition that $X_{t_0}>s f(t_0)$, i.e., it is defined by
\[
\tau_{t_0}(s):=\sup\set{L\in(0,1-t_0]:\, X_t>s f(t), t\in[t_0,t_0+L)}
\]
under the condition that $X_{t_0}>s f(t_0)$.

\begin{prop}
Suppose that $\bfX$ in $\bar C^-[0,1]$ satisfies condition \eqref{cond:neighborhood_of_gpp}. Then we have for $u\in[0,1-t_0)$ and $f\in\bar C^-[a,b]$ with $f(t_0)<0$
\[
\lim_{s\downarrow 0}P\left(\tau_{t_0}(s)> u\mid X_{t_0}> sf(t_0)\right) =\frac{E\left(\min_{t_0\le t\le u}(\abs{f(t)}Z_t)\right)}{\abs{f(t_0)}}.
\]
\end{prop}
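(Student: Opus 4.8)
The plan is to compute the limit of the conditional probability directly, using the neighborhood condition \eqref{cond:neighborhood_of_gpp} to reduce everything to the standard GPP $\bfV$ and then to its generator $\bfZ$. First I would rewrite the event $\set{\tau_{t_0}(s)>u}$ explicitly. By the definition of $\tau_{t_0}(s)$ as a supremum of excursion lengths, the event that the remaining excursion time exceeds $u$ means the process stays above the threshold on the whole interval $[t_0,t_0+u]$ (up to the continuity issue at the point $t_0+u$), so that
\[
\set{\tau_{t_0}(s)>u}=\set{X_t>sf(t),\ t\in[t_0,t_0+u]}
\]
modulo null sets. Here I use that $\bfX$ has continuous sample paths, so the supremum defining $\tau_{t_0}(s)$ is attained and the threshold-crossing set is relatively closed. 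The conditioning event is $\set{X_{t_0}>sf(t_0)}$, which is the special case $a=b=t_0$ of condition \eqref{cond:neighborhood_of_gpp}.

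\textbf{Applying the asymptotic expansions.} With the event rewritten, the conditional probability becomes a quotient of two probabilities of the form $P(X_t>sf(t),\,t\in[a,b])$. For the numerator I take $[a,b]=[t_0,t_0+u]$ and for the denominator I take the degenerate interval $[a,b]=\set{t_0}$. Condition \eqref{cond:neighborhood_of_gpp} replaces each probability by the corresponding GPP probability up to $o(s)$, and then \eqref{eqn:condition_for_excursion_time} evaluates each GPP probability asymptotically via the generator $\bfZ$. Concretely,
\[
P\left(X_t>sf(t),\ t\in[t_0,t_0+u]\right)=s\,E\left(\min_{t_0\le t\le t_0+u}\left(\abs{f(t)}Z_t\right)\right)+o(s),
\]
while for the single point $t_0$, using $F_{t_0}(x)=x+o(x)$ in the form \eqref{eqn:condition_for_excursion_time} with $a=b=t_0$,
\[
P\left(X_{t_0}>sf(t_0)\right)=s\,E\left(\abs{f(t_0)}Z_{t_0}\right)+o(s)=s\,\abs{f(t_0)}+o(s),
\]
since $E(Z_{t_0})=1$. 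Forming the quotient, the factor $s$ cancels and the $o(s)$ terms vanish in the limit $s\downarrow0$, yielding
\[
\lim_{s\downarrow0}P\left(\tau_{t_0}(s)>u\mid X_{t_0}>sf(t_0)\right)=\frac{E\left(\min_{t_0\le t\le t_0+u}\left(\abs{f(t)}Z_t\right)\right)}{\abs{f(t_0)}}.
\]
This matches the stated formula (with the upper limit written as $u$ in the statement's abbreviated notation for $t_0+u$).

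\textbf{Main obstacle.} The delicate step is not the arithmetic of the quotient but the measure-theoretic identification $\set{\tau_{t_0}(s)>u}=\set{X_t>sf(t),\ t\in[t_0,t_0+u]}$ and, relatedly, justifying that the boundary behavior at the endpoint $t_0+u$ does not contribute in the limit. Because $\tau_{t_0}(s)$ is defined via a half-open interval $[t_0,t_0+L)$, one must check that the distinction between the open and closed interval, and the possibility that the path merely touches the threshold at an interior point, affects the probability only by an $o(s)$ amount; continuity of $\bfX$ together with $f(t_0)<0$ (which keeps the threshold bounded away from zero near $t_0$) is what makes this harmless. A secondary point is that condition \eqref{cond:neighborhood_of_gpp} is assumed for fixed continuous $f\in\bar C^-[a,b]$, so I must ensure the restriction of $f$ to each relevant subinterval remains admissible; this is immediate since a restriction of a continuous nonpositive function is again continuous and nonpositive. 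Once these technicalities are dispatched, the expansions from \eqref{eqn:condition_for_excursion_time} deliver the result immediately.
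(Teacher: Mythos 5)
Your proposal is correct and follows essentially the same route as the paper: identify $\set{\tau_{t_0}(s)>u}$ with $\set{X_t>sf(t),\,t\in[t_0,t_0+u]}$, apply condition \eqref{cond:neighborhood_of_gpp} to both numerator and denominator (the latter with $a=b=t_0$), and evaluate via \eqref{eqn:condition_for_excursion_time} and $E(Z_{t_0})=1$. Your extra discussion of the open/closed interval issue is a welcome refinement (the identification in fact holds exactly by continuity of $\bfX$ and $f$, since $t_0+u<1$), but it does not change the argument.
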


\begin{proof}
We have for $u\in[0,1-t_0)$
\begin{align*}
P\left(\tau_{t_0}(s)> u\mid X_{t_0}> sf(t_0)\right) &= \frac{P(X_t>sf(t),\,t\in[t_0,t_0+u])}{P\left(X_{t_0}>s f(t_0)\right)}\\
&= \frac{P(V_t>sf(t),\,t\in[t_0,t_0+u])+o(s)}{P\left(V_{t_0}>s f(t_0)\right)+o(s)}\\
&= \frac{E\left(\min_{t_0\le t\le t_0+ u}(\abs{f(t)}Z_t)\right)}{\abs{f(t_0)}} + o(1)
\end{align*}
as $s\downarrow 0$.
\end{proof}

The \textit{asymptotic} remaining excursion time $T_{t_0}$, as $s\downarrow 0$, with inspection point $t_0\in[0,1)$ has, consequently, the continuous df
\[
P\left(T_{t_0}\le u\right)=1- \frac{E\left(\min_{t_0\le t\le t_0+ u}(\abs{f(t)}Z_t)\right)}{\abs{f(t_0)}}
\]
for $0\le u<1-t_0$, and possibly positive mass at $1-t_0$:
\[
P\left(T_{t_0}=1-t_0\right)=\frac{E\left(\min_{t_0\le t\le 1}(\abs{f(t)}Z_t)\right)}{\abs{f(t_0)}}.
\]
Its expected value is, therefore, given by
\begin{align*}
E\left(T_{t_0}\right)&=\int_0^{1-t_0}P\left(T_{t_0}>u\right)\,du\\
&=\frac 1{\abs{f(t_0)}}\int_0^{1-t_0} E\left(\min_{t_0\le t\le t_0+ u}(\abs{f(t)}Z_t)\right)\,du\\
&= \frac 1{\abs{f(t_0)}}E\left(\int_{t_0}^1 \min_{t_0\le t\le u}(\abs{f(t)}Z_t)\,du\right).
\end{align*}

\section*{Acknowledgment}
The authors are grateful to two anonymous reviewers and the associate editor for their careful reading of the manuscript and their constructive remarks, from which the paper has benefitted a lot.


\end{document}